\newtheorem{theorem}[equation]{Theorem}
\newtheorem{astala}[equation]{Astala's Hausdorff Dimension Distortion Theorem} 
\newtheorem{lemma}[equation]{Lemma}
\newtheorem{proposition}[equation]{Proposition}
\newtheorem{mtheo}[equation]{Main Theorem}
\theoremstyle{remark} 
\newtheorem*{ack}{Acknowledgment}
\theoremstyle{remark}
\newtheorem{remark}[equation]{Remark}
\numberwithin{equation}{section}
\def\R{\mathbb R}
\def\C{\mathbb C}
\def\D{\mathbb D}
\def\H{\mathcal H}
\def\S{\mathcal S}
\def\diam{\operatorname {diam}}
\def\norm#1.#2.{\lVert#1\rVert_{#2}}
\def\Norm#1.#2.{\bigl\lVert#1\bigr\rVert_{#2}}
\def\NOrm#1.#2.{\Bigl\lVert#1\Bigr\rVert_{#2}}
\def\NORm#1.#2.{\biggl\lVert#1\biggr\rVert_{#2}}
\def\NORM#1.#2.{\Biggl\lVert#1\Biggr\rVert_{#2}}
\def\ip#1,#2,{\langle #1,#2\rangle}
\def\Ip#1,#2,{\bigl\langle#1,#2\bigr\rangle}
\def\IP#1,#2,{\Bigl\langle#1,#2\Bigr\rangle}
\def\mid{\,:\,}
\def\XXint#1#2#3{{\setbox0=\hbox{$#1{#2#3}{\int}$}
     \vcenter{\hbox{$#2#3$}}\kern-.5\wd0}}
\def\barint_#1{\mathchoice
  {\mathop{\vrule width 6pt height 3 pt depth -2.5pt
    \kern -8.8pt \intop}\nolimits_{#1}}%
  {\mathop{\vrule width 5pt height 3 pt depth -2.6pt 
    \kern -6.5pt \intop}\nolimits_{#1}}%
  {\mathop{\vrule width 5pt height 3 pt depth -2.6pt
    \kern -6pt \intop}\nolimits_{#1}}%
  {\mathop{\vrule width 5pt height 3 pt depth -2.6pt
    \kern -6pt \intop}\nolimits_{#1}}}
\title[QC Distortion of Hausdorff Measure]
{Astala's Conjecture on Distortion of Hausdorff Measures under Quasiconformal Maps in the Plane}
\author[M.T. Lacey] {Michael T. Lacey$^{(1)}$} 
\address{School of Mathematics \\ Georgia Institute of Technology \\ Atlanta GA 30332 } 
\thanks{$^{(1)}$Research supported in part by a grant from the NSF}
\author[E.T. Sawyer]{Eric T. Sawyer$^{(2)}$} 
\address{ Department  of Mathematics \& Statistics, McMaster University, 1280 Main Street West, Hamilton, Ontario, Canada  L8S 4K1  }
\thanks{$^{(2)}$Research supported in part by a grant from the NSERC}
\author[I. Uriarte-Tuero]{Ignacio Uriarte-Tuero}
\address{ Mathematics Department \\ University of Missouri \\ Columbia, MO 65211 USA \\ \&
Department  of Mathematics \\ Michigan State University \\ East Lansing MI 48824 }
 \subjclass[2000]{Primary 30C62, 35J15, 35J70}
 \keywords{Quasiconformal, Hausdorff measure, Removability}
\begin{document}


\begin{abstract}
Let $E \subset \C$ be a compact set, $g: \C \to \C$ be a $K$-quasiconformal map, and  let $ 0<t<2$. Let $\H^t$ denote $t$-dimensional Hausdorff measure. Then 
\begin{equation*}\label{AbsoluteContinuityOfHausdorffMeasuresUnderPushForwardByQCMapsEquationForIntroduction}
\H^t (E) = 0 \quad \Longrightarrow \quad  \H^{t'} (g E) = 0\,, 
\qquad t'=\frac{2Kt}{2+(K-1)t}\,. 
\end{equation*}
This is a refinement of a set of inequalities on the distortion of Hausdorff dimensions  
by quasiconformal maps proved by K.~Astala \cite{astalaareadistortion} and answers in the positive a
conjecture of K.~Astala in \emph{op.~cit.} 
\end{abstract}

\maketitle

\section{Introduction}\label{Introduction}

An orientation-preserving homeomorphism $\phi:\Omega\rightarrow\Omega'$ between planar domains $\Omega,\Omega' \subset \C$ is called {\emph{$K$-quasiconformal}} if  it belongs to  the Sobolev space $W^{1,2}_{loc}(\Omega)$ and satisfies the {\emph{distortion inequality}}
\begin{equation}\label{distortioninequality}
\max_\alpha|\partial_\alpha\phi|\leq K\min_\alpha|\partial_\alpha\phi| \,\,\,\,\,\text{a.e. in }\Omega \;.
\end{equation}
Infinitesimally, quasiconformal mappings carry circles to ellipses with eccentricity at most $ K$.
  Finer properties of quasiconformal mappings can be identified by studying their mapping properties 
with respect to  Hausdorff measure, the primary focus of this paper. 
It has been known since the work of Ahlfors \cite{ahlforsEdition2} that quasiconformal mappings preserve sets of zero Lebesgue measure.
It is also well known that they preserve sets of  Hausdorff dimension zero, since $K$-quasiconformal mappings are H\"older 
continuous with exponent $1/K$, see \cite{mori}. However, they need not  preserve Hausdorff dimension  bigger than zero.
 Gehring and Reich  
 \cite{GehringReichAreaDistortionUnderQCMaps} 
 identified as a conjecture  the precise bounds for the area distortion under quasiconformal mappings, 
 a conjecture verified by the   the groundbreaking work of Astala 
\cite{astalaareadistortion}.
 As a consequence of area distortion, Astala obtained the theorem below, which proved the case $n=2$ of a conjecture of Iwaniec and Martin in $\R^n$ \cite{iwaniecmartinquasiconformalmappingscapacity}.

\begin{astala}
For any compact set $E$ with Hausdorff dimension $0<t<2$ 
and  any $K$-quasiconformal mapping $\phi$ we have
\begin{equation}\label{distortionofdimension}
\frac{1}{K}\left(\frac{1}{t}-\frac{1}{2}\right)\leq\frac{1}{\dim(\phi E)}-\frac{1}{2}\leq K\left(\frac{1}{t}-\frac{1}{2}\right) \ .
\end{equation}
Finally, these bounds are optimal, in that  equality may occur in either estimate. 
\end{astala}

The  question we study concerns 
refinement of the left-hand endpoint above.  
Can it be improved to the level of Hausdorff measures $\H^t$?  Indeed, this is the case.  The next theorem, 
 the main theorem of this paper, answers in the affirmative Astala's Question 4.4 in \cite{astalaareadistortion}.

\begin{mtheo}\label{AbsoluteContinuityOfHausdorffMeasuresUnderPushForwardByQCMapsForIntroduction}
If $\phi$ is a planar $K$-quasiconformal mapping, $0 \le t \le 2$ and $t'=\frac{2Kt}{2+(K-1)t}$,  then we have 
the implication below for all  compact sets $ E\subset \C$. 
\begin{equation}\label{abscont}
\H^t(E)=0 \quad \Longrightarrow \quad \H^{t'}(\phi E)=0,
\end{equation}

\end{mtheo} 
 
Since the inverse of a $K$-quasiconformal mapping is also a $K$-quasiconformal mapping, the following refinement of the right-hand endpoint in \eqref{distortionofdimension} follows: for a compact set $F$, $\H^{t'}(F)>0 \quad \Longrightarrow \quad \H^{t}(\phi F)>0$.

The above theorem  is sharp in two senses. 
Firstly, the hypothesis $\H^t (E) = 0$ cannot be weakened to $\H^t (E) < \infty$ 
while keeping the same conclusion (i.e. the statement ``$\H^t (E) < \infty$ implies $\H^{t'}(\phi E)=0$", under the same conditions of Theorem \ref{AbsoluteContinuityOfHausdorffMeasuresUnderPushForwardByQCMapsForIntroduction}, which has a weaker hypothesis than Theorem \ref{AbsoluteContinuityOfHausdorffMeasuresUnderPushForwardByQCMapsForIntroduction} and hence is a stronger statement than Theorem  \ref{AbsoluteContinuityOfHausdorffMeasuresUnderPushForwardByQCMapsForIntroduction}, is false.)  
Secondly, if we keep the hypothesis $\H^t (E) = 0$, the conclusion $\H^{t'} (\phi E) =0$ cannot be strengthened, to Hausdorff dimension zero 
with respect to a gauge. For any gauge function $h$ satisfying $ \lim_{s \to 0} \frac{s^{t'}}{h(s)} =0$, 
 there exists a compact set $E$ and $K$-quasiconformal mapping $\phi $ with $\H^t (E) = 0$ but 
$\H^h (\phi E) = \infty$. See Theorem 1.7 (a) in \cite{uriartesharpqcstretching} for the relevant examples.


Some instances of this theorem are known, and have connection to significant further properties of 
quasiconformal maps.   
Note that the above classical result of Ahlfors asserts that the theorem is true when $t=2$, while the theorem is obviously true when $t=0$ since $\phi$ is a homeomorphism. 
In fact, for the Lebesgue measure, there is the following precise quantitative bound 
due to \cite{astalaareadistortion} for a properly normalized $K$-quasiconformal mapping $\phi$: 
$$|\phi E|\leq C\,|E|^\frac{1}{K}\,.$$
This bound leads to the sharp Sobolev regularity estimate 
$\phi\in W^{1,p}_{\textup{loc}}(\C)$ for every  $p<\frac{2K}{K-1}$.


A positive answer was also given for the special case $t' =1$ (hence $t = \frac{2}{K+1}$) in 
\cite{astalaclopmateuorobitguriartepreprint}. This special case is  important 
due to its applications towards removability of sets for bounded $K$-quasiregular mappings,
i.\thinspace e.\ a quasiconformal analogue of the celebrated Painlev\'{e}'s problem.
We refer the reader to \cite{tolsasemiadditivityanalyticcapacity} and \cite{astalaclopmateuorobitguriartepreprint} for details. The same paper \cite{astalaclopmateuorobitguriartepreprint}  contains other related results, 
as does Prause \cite{prausedistortionhausdorffmeasuresunderqcmaps}.

Let us give an overview of the proof and the paper.  The highest levels of the argument follow a 
familiar line of reasoning.  Matters are reduced to the case of small dilatation in 
Lemma~\ref{PropositionAbsoluteContinuityOfHausdorffMeasuresUnderPushForwardByQCMapsSmallDilatationV2}.   
Thus, we take a compact set $ E$ with $ t$-Hausdorff measure equal to zero and a $K$-quasiconformal map $ \phi $. 
To provide the conclusion that the $ t'$-Hausdorff measure of $ \phi E$ is zero, we should exhibit 
a covering of $ \phi E$ by (quasi)disks that is arbitrarily small in  $ \mathcal H ^{t'}$-measure.
To do this we should begin with a corresponding covering of $ E$ that is small in the $ \mathcal H ^{t}$-measure. 
The first novelty is that we show that this can be done with certain dyadic cubes (denoted $ P\in \mathcal P$ below) that admit one key additional feature, that they obey a $ t$-packing condition described in Proposition~\ref{PropositionPackingConstruction}.  

Associated with $ \mathcal P$ is a measure $ w _{t, \mathcal P}$, defined in 
\eqref{tefinitionWSubEV2}, which exhibits `$ t$-dimensional' 
behavior, 
reflective of the $ t$-packing condition.
The second novelty is that the Beurling operator, and more generally a standard Calder\'{o}n-Zygmund operator, is bounded on 
$ L ^{2} (w _{t, \mathcal P})$, see Proposition~\ref{PropositionWeightedNormInequalityBeurlingTransformV2}.  This 
fact does \emph{not} follow from standard weighted theory of singular integrals, 
but this new class of measures have enough additional combinatorial structure that a proof of this fact is not difficult to supply. 

The mapping $ \phi $ is then factored into $ \phi = \phi _1 \circ h$ where $ \phi _1$ is the `conformal inside' 
part and $ h$ is the `conformal outside' part.  The conformal inside part admits a relevant estimate that can 
be found in \cite{astalaclopmateuorobitguriartepreprint}, and is recalled below.  The relevant estimate 
on the conformal outside part is new, and uses in an essential way the two novelties just mentioned.  See 
the proof of Lemma~\ref{EstimateForConformalOutside}. It uses Astala's approach for distortion of area \cite{astalaareadistortion}. The conformal inside/outside order of the factorization $ \phi = \phi _1 \circ h$ appears also in \cite{prausedistortionhausdorffmeasuresunderqcmaps}.   

The principal Lemmas are in Section~\ref{PlanAttackAstalasApproach}.  The new lemma on approximating 
Hausdorff content, with control of a packing condition, namely Proposition~\ref{PropositionPackingConstruction}, 
is given in Section~\ref{SectionProofPropositionPackingConstruction}.  Section~\ref{SectionProofPropositionWeightedNormInequalityBeurlingTransformV2} 
contains the proof of  weighted estimate 
for the Beurling operator,  Proposition~\ref{PropositionWeightedNormInequalityBeurlingTransformV2}. 
These two Propositions are combined in
Section~\ref{SectionProofPropositionAbsoluteContinuityOfHausdorffMeasuresUnderPushForwardByQCMapsSmallDilatationV2}.

As usual, in a string of inequalities, the letter $C$ might denote different constants from one inequality to the next.


\begin{ack}
 All authors acknowledge affiliation and support from the Fields Institute in Toronto, during the 
 Thematic Program in Harmonic Analysis.  The first author was  a 
 George Elliot Distinguished Visitor during his stay at the Fields Institute.  
 Part of this work was done the Banff International Research Station, 
 during the workshop 08w5061 Recent Developments in Elliptic and 
 Degenerate Elliptic Partial Differential Equations, Systems and Geometric Measure Theory.  
 The third named author would like to thank Kari Astala, Albert Clop, Guy David, Joan Mateu, Joan Orobitg, Carlos 
 P\'{e}rez, Joan Verdera, and Alexander Volberg for conversations relating to this work.
 
The authors also thank the referees and I. Prause for a detailed reading of the paper and many useful comments that helped clarify the exposition.
\end{ack}

\section{Principal Propositions}\label{PlanAttackAstalasApproach} 

We state the principal Propositions of the paper, with the first being a restatement of the Main Theorem 
for a specific class of quasiconformal mappings, namely those of small dilatation. 

\begin{lemma}\label{PropositionAbsoluteContinuityOfHausdorffMeasuresUnderPushForwardByQCMapsSmallDilatationV2}
Let $ 0<t<2$. Then there is a small constant $ 0<\kappa_0<1 $ ($\kappa_0 = \kappa_0(t)$ is a decreasing function of $t$) so that the following holds.  
Let 
$g: \C \to \C$ be a $K$-quasiconformal map with $ \frac{K-1}{K+1} \leq \kappa_0$. Then we have the 
following implication for all compact subsets $ E\subset \mathbb C $. 
\begin{equation}\label{AbsoluteContinuityOfHausdorffMeasuresUnderPushForwardByQCMapsEquationVersionSmallDilatationV2}
\H^t (E) = 0 \quad \Longrightarrow \quad  \H^{t'} (gE) = 0\,,
\end{equation}
where $t'=\frac{2Kt}{2+(K-1)t}$.
\end{lemma}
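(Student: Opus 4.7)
The plan is to realize both novelties flagged in the introduction inside a single quantitative estimate.  Fix $\varepsilon>0$.  Since $\mathcal{H}^t(E)=0$, I would start by invoking Proposition~\ref{PropositionPackingConstruction} to produce a covering of $E$ by a family $\mathcal{P}$ of (essentially disjoint) dyadic cubes with $\sum_{P\in\mathcal{P}}\ell(P)^t<\varepsilon$ and, crucially, satisfying the $t$-packing condition of that Proposition; the associated measure $w_{t,\mathcal{P}}$ of \eqref{tefinitionWSubEV2} then encodes the ``$t$-dimensional'' behaviour of the covering.  To establish $\mathcal{H}^{t'}(gE)=0$ it suffices to bound $\sum_{P\in\mathcal{P}}\diam(g(P))^{t'}$ by a quantity that vanishes with $\varepsilon$, since $g$ is a homeomorphism and the image $g(P)$ is a quasidisk whose $t'$-content is controlled by its diameter.

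Following the strategy indicated in the overview, I would then factor $g=\phi_1\circ h$, with $h$ the ``conformal outside" factor (holomorphic away from the union of cubes) and $\phi_1$ the ``conformal inside" factor.  The inside factor is handled by quoting the relevant bound from \cite{astalaclopmateuorobitguriartepreprint}: it converts control of $\sum_{P}\diam(h(P))^{t}$ into control of $\sum_{P}\diam(\phi_1(h(P)))^{t'}$, using the H\"older-type regularity of principal solutions with Beltrami coefficient supported inside the image quasidisks $h(P)$.  The whole problem thus reduces to a new ``conformal outside" estimate, essentially of the form $\sum_{P\in\mathcal{P}}\diam(h(P))^{t}\lesssim \sum_{P\in\mathcal{P}}\ell(P)^t$, with implied constant stable as the dilatation becomes small.

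For this conformal-outside estimate I would run Astala's holomorphic-deformation argument: embed $h$ in the holomorphic family obtained by scaling its Beltrami coefficient by a complex parameter $\lambda$ in a disk, differentiate in $\lambda$ to see Beurling transforms acting on the dilatation, and then use the familiar exponentiation trick $\exp(\text{Beurling})$ to upgrade an $L^{2}$ estimate into a $t$-dimensional content bound.  The second novelty enters here: the natural $L^{2}$ norms that arise are weighted by $w_{t,\mathcal{P}}$, and are controlled precisely by Proposition~\ref{PropositionWeightedNormInequalityBeurlingTransformV2}.  The applicability of that weighted inequality is in turn guaranteed by the $t$-packing condition built into $\mathcal{P}$ in the very first step.

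The main obstacle is twofold and interlocking.  First, the weighted $L^{2}$-boundedness of the Beurling transform cannot be read off from the classical Muckenhoupt theory, because $w_{t,\mathcal{P}}$ is typically not an $A_{2}$ weight; the combinatorial/packing structure of $\mathcal{P}$ must be exploited directly, as in Proposition~\ref{PropositionWeightedNormInequalityBeurlingTransformV2}.  Second, the operator bound must be controlled uniformly along an \emph{analytic} family (of dilatations, equivalently of weights), so that Astala's exponentiation converges in a norm strong enough to yield a Hausdorff-content conclusion.  This is precisely where the smallness hypothesis $\frac{K-1}{K+1}\leq\kappa_{0}$ is essential: it keeps the deformation parameter $\lambda$ within the range of validity of the weighted bound, and it keeps $t'$ close enough to $t$ that the packing constants of $\mathcal{P}$ survive the passage from $E$ to $gE$.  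Once these two obstacles are overcome, letting $\varepsilon\to 0$ delivers $\mathcal{H}^{t'}(gE)=0$ as required.
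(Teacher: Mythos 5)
Your high-level architecture agrees with the paper's: reduce via Proposition~\ref{PropositionPackingConstruction} to a covering $\mathcal{P}$ with small $\sum\ell(P)^t$ and $t$-packing; factor $g=\phi_1\circ h$ into conformal-inside and conformal-outside pieces; quote Theorem~\ref{QCConformalInside} from \cite{astalaclopmateuorobitguriartepreprint} for the inside; and reduce to the new conformal-outside estimate $\sum_P\diam(h(P))^t\lesssim\sum_P\ell(P)^t$, whose proof must invoke Proposition~\ref{PropositionWeightedNormInequalityBeurlingTransformV2}. That matches the paper exactly. (You also omit the technical reduction to a principal mapping on $(\tfrac{1}{32},\tfrac{1}{16})^2$ via M\"obius, Stoilow factorization and Koebe, but that is routine.)

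Where you go fuzzy --- and I would call it a genuine gap --- is in the internal mechanism of the conformal-outside lemma. You propose Astala's holomorphic-deformation argument: scale the Beltrami coefficient by $\lambda$, differentiate in $\lambda$ to see Beurling transforms, and ``exponentiate'' to promote an $L^2$ bound to a content bound. That machinery is essentially the variational argument underlying the conformal-\emph{inside} result (and is already folded into Theorem~\ref{QCConformalInside}); on its own it does not tell you how to convert an $L^2(w_{t,\mathcal{P}})$ operator bound into a bound on the $\ell^{t/2}$-quasinorm $\bigl(\sum_j\diam(f(P_j))^t\bigr)^{2/t}$. The step you are missing is the reverse H\"older inequality for $p=t/2<1$ (Lemma~\ref{HoelderInequalityForPLessThan1}): one picks the extremizing weights $\beta_j$ of \eqref{tefinitionBetaJ}, which make $\bigl(\sum_j\diam(f(P_j))^t\bigr)^{2/t}\le\sum_j\diam(f(P_j))^2\beta_j\lesssim\int J(z,f)\,\widetilde w_{t,\mathcal{P}}\,dA$, then uses the Beurling identity $f_z=1+\S(f_{\bar z})$ and the Neumann series \eqref{PowerSeriesOfFZBarInTermsOfMu}. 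The small-dilatation hypothesis enters precisely to make that Neumann series converge in $L^2(w_{t,\mathcal{P}})$, with radius governed by $\lVert\S\rVert_{L^2(w_{t,\mathcal{P}})\to L^2(w_{t,\mathcal{P}})}$. Without the $p<1$ reverse H\"older reduction, the weighted Beurling bound alone does not produce the diameter inequality \eqref{MorallyEstimateForDHausdorffContent}.

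A secondary misstatement: you say smallness of $\tfrac{K-1}{K+1}$ ensures ``the packing constants of $\mathcal{P}$ survive the passage from $E$ to $gE$.'' The packing condition is used only on the domain side --- it is what makes $w_{t,\mathcal{P}}$ a weight for which $\S$ is bounded --- and nothing about $g(\mathcal{P})$ or $gE$ is asked to satisfy any packing condition. The sole role of $\kappa_0$ is the Neumann-series convergence just described (and a compatible choice in the inductive factorization into small-dilatation pieces in the proof of the Main Theorem).
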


\begin{proof}[Proof of Theorem~\ref{AbsoluteContinuityOfHausdorffMeasuresUnderPushForwardByQCMapsForIntroduction}] 
We use  the usual factorization  of a $K$-quasiconformal mapping into those with small dilatation.  
For a fixed $K $-quasiconformal mapping $ g$, we can write 
\begin{equation}\label{FactorizationInto1PlusEpsilonQCMapsV2}
g = g_\lambda  \circ \dots \circ g_2 \circ g_1,
\end{equation}
 so that each $g_i$ is $K_i$-quasiconformal, $K = K_1 \cdots K_\lambda$, and 
$K_i \leq \frac{1+\kappa_0}{1-\kappa_0}$  for all $i = 1,2, \cdots , \lambda$, and $\kappa_0 = \kappa_0(t')$. 
(See \cite{ahlforsEdition2} or \cite{lehto}.)
It follows that the dilatation of each $ g_i$ satisfies $\frac {K_i-1} {K_i+1} \leq \kappa_0(t')$, that is 
Lemma~\ref{PropositionAbsoluteContinuityOfHausdorffMeasuresUnderPushForwardByQCMapsSmallDilatationV2} 
applies to each $ g_i$ individually.

Indeed, let us set $ \tau (t,K)=\frac {2Kt} {2 + (K-1)t}$,  
and inductively define $ \tau _1= \tau (t,K_1)$, 
and $ \tau _{i+1}= \tau (\tau _i,K_{i+1})$.  Let $ E\subset \mathbb C $ be a compact subset of the 
plane with $ \mathcal H ^{t} (E) =0$. It follows from an inductive application of
Lemma~\ref{PropositionAbsoluteContinuityOfHausdorffMeasuresUnderPushForwardByQCMapsSmallDilatationV2}  
(since $ \kappa_0(t') \leq \kappa_0(\tau _i)$ for all $i = 1,2, \cdots , \lambda$)
that we have 
\begin{equation*}
\mathcal H ^{\tau_j} (g_j \circ \cdots \circ g_1 (E))=0 \,, \qquad 1\le j \le \lambda \,.  
\end{equation*}
And it is easily checked that $ \tau_ \lambda = \frac{2Kt}{2+(K-1)t}$, which is the dimension $t'$
in Theorem~\ref{AbsoluteContinuityOfHausdorffMeasuresUnderPushForwardByQCMapsForIntroduction}.

\end{proof}

We state our Proposition on the approximation of Hausdorff content with the $ t$-packing condition. 
Let $ \mathcal P $ be a finite collection of disjoint dyadic cubes in the plane. Let $0<t<2$. We denote the $t$-Carleson packing norm of $ \mathcal P$ as follows: 
\begin{equation}\label{e.packingV2}
\norm \mathcal P . \textup{$t$-pack}. = \sup _{Q} \Biggl[  \ell (Q) ^{-t} \sum _{\substack{P\in \mathcal P\\ P\subset Q }} \ell (P) ^{t} \Biggr] ^{1/t} \,, 
\end{equation}
where the supremum is taken over all dyadic cubes $Q$.  
In this display and throughout this paper, $ \ell (Q)$ denotes the side-length of the cube $ Q$. 
And we say 
that $\mathcal P$ satisfies the $t$-\emph{Carleson Packing Condition} if  $\norm \mathcal P . \textup{$t$-pack}. < \infty$. 

Recall that for a set $E$, $0 \leq s \leq 2$, and $0< \delta \leq \infty$, one defines
\begin{equation}\label{tefinitionHausdorffSDelta}
\H^s_{\delta} (E) = \inf \left\{ \sum_{i=1}^{\infty} \diam(B_i)^s : E \subset \bigcup_{i=1}^{\infty} B_i \,,  \diam(B_i) \leq \delta \; \right\} \,,
\end{equation}
where 
$B_i \subset \C$ is a set, and $\diam(B_i)$ denotes its diameter.
Then one defines the Hausdorff $s$-measure of $E$ to be
\begin{equation}\label{tefinitionHausdorffSMeasure}
\H^s (E) = \lim_{\delta \to 0} \H^s_{\delta} (E) = \sup_{\delta > 0} \H^s_{\delta} (E) \,.
\end{equation}
The quantity $\H^s_{\infty} (E)$ is usually referred to as the Hausdorff content of $E$.

It is well known that in the definition of Hausdorff measure, if instead of covering with balls or arbitrary sets, one covers with dyadic cubes, one obtains an equivalent measure. We will take the dyadic cubes to be closed unless otherwise stated, i.e. of the form $[2^{-k} m_1, 2^{-k} (m_1+1) ] \times [2^{-k} m_2, 2^{-k} (m_2+1)]$, with $k$ a non-negative integer and $m_1, m_2$ integers. Recall also that $\H^t(E) = 0 \Longleftrightarrow \H^t_{\infty} (E) = 0.$ For these and related facts, see e.g. \cite{mattila} or \cite{falconerbook}. 

Only the case $m=2$ of the following Proposition is used below. 

\begin{proposition}\label{PropositionPackingConstruction} Let $m \geq 0$ be an integer. Then there is a positive constant $C$ such that, for any compact $E \subset (0,1)^2 \subset \C$, $0<t<2$, and $\varepsilon > 0$, there is a finite collection of closed dyadic cubes $\mathcal P = \{ P_i \}_{i=1}^{N}$ such that 
\begin{itemize}

\item [(a)] $2^m P_i \cap 2^m P_j = \emptyset$ for $i \neq j$.

\item [(b)] $E \subset \bigcup_{i=1}^{N} 3 \cdot 2^m P_i$.

\item [(c)] $\norm \mathcal P . \textup{$t$-pack}. \leq 1$.

\item [(d)] $\sum_{i=1}^{N} \ell (P_i)^t \leq C \left( \H^t_{\infty} (E) + \varepsilon  \right) $.

\end{itemize}

\end{proposition}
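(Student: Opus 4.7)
The plan is to construct $\mathcal{P}$ in three successive stages. First, invoke the (standard) equivalence between $\mathcal{H}^t_\infty$ and its dyadic analogue together with the compactness of $E$ to produce a \emph{finite} family $\{Q_j\}_{j=1}^N$ of closed dyadic cubes with $E \subset \bigcup_j Q_j$ and
\[
\sum_{j=1}^N \ell(Q_j)^t \;\leq\; C_0\bigl( \mathcal{H}^t_\infty(E) + \varepsilon \bigr),
\]
where $C_0$ depends only on the dimension. Discarding any cube contained in another, we may further assume the $Q_j$ are pairwise disjoint. The remaining work is to prune this cover so that the $t$-packing condition (c) holds, and then to subselect so that the $2^m$-separation (a) holds while keeping the $3 \cdot 2^m$-enlargements of the surviving cubes covering $E$, which gives (b).

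The pruning step is a stopping-time construction. For each dyadic cube $Q$ set $m(Q) = \sum_{Q_j \subset Q} \ell(Q_j)^t$, and let $\mathcal{S}$ consist of the (finitely many, pairwise disjoint) \emph{maximal} dyadic cubes satisfying $m(Q) > \ell(Q)^t$. Define
\[
\mathcal{P}_0 \;=\; \mathcal{S} \,\cup\, \bigl\{ Q_j \,:\, Q_j \not\subset S \text{ for every } S \in \mathcal{S}\bigr\}.
\]
Since $\ell(S)^t < m(S)$ for every $S \in \mathcal{S}$, replacing each batch $\{Q_j \subset S\}$ by the single cube $S$ never increases the total $t$-cost, so $\sum_{P \in \mathcal{P}_0} \ell(P)^t \leq \sum_j \ell(Q_j)^t$, and $\mathcal{P}_0$ manifestly still covers $E$. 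To verify $\|\mathcal{P}_0\|_{t\text{-pack}} \leq 1$, fix any dyadic $R$: if $R \subseteq S$ for some $S \in \mathcal{S}$, then by maximality of $\mathcal{S}$ the only $\mathcal{P}_0$-element inside $R$ is $R$ itself, which occurs only when $R = S$, so the packing sum is at most $\ell(R)^t$; otherwise $R$ is not selected, so $m(R) \leq \ell(R)^t$, and using disjointness of the $\mathcal{S}$-cubes inside $R$,
\[
\sum_{P \in \mathcal{P}_0,\; P \subset R} \ell(P)^t \;\leq\; \sum_{S \in \mathcal{S},\; S \subset R} m(S) \;+ \sum_{\substack{Q_j \subset R\\ Q_j \not\subset \text{any } S}} \ell(Q_j)^t \;=\; m(R) \;\leq\; \ell(R)^t.
\]

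Finally, enumerate $\mathcal{P}_0 = \{R_1, R_2, \ldots\}$ in non-increasing order of side length and greedily admit $R_i$ into $\mathcal{P}$ exactly when $2^m R_i$ is disjoint from $2^m P$ for every $P$ already selected. Condition (a) holds by construction. If $R_i$ is rejected in favour of an earlier $P \in \mathcal{P}$, then $\ell(P) \geq \ell(R_i)$ and $2^m P \cap 2^m R_i \neq \emptyset$, so a short $L^\infty$-distance computation (valid precisely because $m \geq 0$) gives $R_i \subset 3 \cdot 2^m P$; since $\mathcal{P}_0$ already covers $E$, this proves (b). Condition (c) is inherited from $\mathcal{P} \subset \mathcal{P}_0$, and (d) follows from $\sum_{P \in \mathcal{P}} \ell(P)^t \leq \sum_{P \in \mathcal{P}_0} \ell(P)^t \leq \sum_j \ell(Q_j)^t \leq C_0(\mathcal{H}^t_\infty(E) + \varepsilon)$. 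The delicate step of the argument is verifying the packing bound after the stopping-time pruning, because inside a generic dyadic $R$ both the (large) selected cubes and the (small) residual $Q_j$'s contribute to the packing sum; maximality of $\mathcal{S}$ is precisely what lets those two contributions telescope to $m(R)$ and hence be controlled by $\ell(R)^t$. Everything else is routine dyadic combinatorics and a standard Vitali selection argument.
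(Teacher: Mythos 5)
Your proof is correct, and it takes a genuinely different route from the paper's in both of its two main steps. For the $t$-packing condition, the paper's argument is variational: it introduces the notion of an \emph{admissible} collection (a finite cover of $E$ by dyadic cubes with disjoint interiors, side lengths in a fixed bounded range, all meeting $E$), takes an admissible collection $\mathcal T$ minimizing $\sum_i \ell(T_i)^t$, and notes that the packing bound \eqref{tPackingConditionForSizesBetween2MAnd1} holds automatically for the minimizer, since otherwise one could replace the offending batch $\{T_i \subset Q\}$ by the single cube $Q$ and strictly lower the sum. You instead run a Calder\'on--Zygmund stopping-time: form the maximal cubes $\mathcal S$ where $m(Q) > \ell(Q)^t$, replace the portion of the cover below each stopping cube by the stopping cube itself, and then verify the packing bound by hand via the telescoping identity $m(R) = \sum_{S\subset R} m(S) + \sum_{Q_j \subset R,\,Q_j \not\subset\text{any }S}\ell(Q_j)^t$. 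The two constructions are morally the same pruning (the paper's minimizer is exactly what a repeated application of your replacement step converges to), but the paper's formulation dispenses with the explicit verification, while yours is more algorithmic and transparent about \emph{where} the original cover overshoots. For the separation condition (a)--(b), the paper does not subselect at all: it keeps every $T_i$ and passes to the single dyadic descendant $\widehat{T_i}$ of side length $2^{-m-1}\ell(T_i)$ sitting just inside the center of $T_i$, so that $2^m\widehat{T_i}\subset T_i$ forces disjointness and $T_i\subset 3\cdot 2^m\widehat{T_i}$ preserves the cover. You instead run a Vitali-type greedy selection on $\mathcal P_0$ in decreasing order of side length, discarding cubes whose $2^m$-enlargement collides with an already-chosen one and absorbing the discarded cube into the $3\cdot 2^m$-enlargement of the blocker. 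Both work; the paper's shrinking trick has the minor advantage that it keeps $N$ cubes in bijection with $N$ cubes (no cube is lost, only shrunk), whereas your Vitali selection is the more standard covering-lemma move and makes $\mathcal P\subset\mathcal P_0$, so condition (c) is inherited without any new check. One small point worth tightening in your write-up: the initial reduction to a \emph{finite} dyadic cover via compactness requires passing to interiors (e.g.\ the open sets $(3Q_n)^\circ$, as in the paper) before extracting a finite subcover, since a countable cover by closed dyadic cubes need not admit a finite subcover directly; you invoke compactness correctly in spirit, but the intermediate step deserves a sentence.
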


Given $0<t\leq2$ and $\mathcal P $ a collection of pairwise disjoint dyadic cubes, we define the 
 measure $w_{t, \mathcal P}$ associated with $\mathcal P$ by
\begin{equation}\label{tefinitionWSubEV2}
w_{t, \mathcal P} (x) = \sum_j  \ell (P_j)^{t-2} \; \chi_{P_j}(x) ,
\end{equation}
where $\chi_{P_j}$ denotes the characteristic function of $P_j$ and
$\ell (P_j)$ denotes the side-length of $P_j$. Define also
\begin{equation}\label{tefinitionOverlinePV2}
\overline{P} = \bigcup_{i=1}^{N} P_i \,.
\end{equation}
The measure $ w _{t, \mathcal P}$ behaves as does a $t$-dimensional measure, 
namely if $ Q$ is an arbitrary cube (dyadic or not) with sides parallel to the coordinate axes,
then 
\begin{equation}\label{PackingConditionForMeasure}
 w _{t, \mathcal P} (Q)\le 16 \;\norm \mathcal P . \textup{$t$-pack}. ^{t} \; \ell (Q) ^{t} \; .
\end{equation}

We will be concerned with a quasiconformal map $ f$ that is conformal outside of $ \overline P$, 
and we will need an estimate on the diameters of $ f (P_i)$.  $ f$ will have an explicit expression 
as a Neumann series involving the Beurling operator, which we recall here. Let
\begin{equation}\label{tefinitionBeurlingTransformV2}
(\S f)(z) =   -\frac{1}{\pi} \textup{p.v.}\int_{\C} \frac{f(\tau)}{ (z-\tau)^2 } \; dA(\tau),  
\end{equation}
be the Beurling transform. This is an example of a standard singular integral bounded on $L^2(\C)$ (see \cite{steinfatbook}.) 
The second proposition gives a weighted norm inequality with respect to the weight $w_{t, \mathcal P}$ for the
compression of $\S$ to the set $\overline{P}$, i.e. the operator $ \chi_{\overline{P}} \S \chi_{\overline{P}}$, 
assuming that $\mathcal P$ satisfies a Carleson $t$-packing condition.

\begin{proposition}\label{PropositionWeightedNormInequalityBeurlingTransformV2}
Let $0<t<2$ and $\mathcal P = \{ P_i \}_{i=1}^{N}$ be a collection of open dyadic cubes with pairwise disjoint triples, i.e. $3 P_i \cap 3 P_j = \emptyset$ for $i \neq j$. Assume further that $\norm \mathcal P . \textup{$t$-pack}. \leq 1$.
Then there exists an absolute positive constant $C = C(t)$ such that
\begin{equation}\label{e.WeightedNormInequalityBeurlingTransformV2}
\lVert \S ( \chi_{\overline{P}} f ) \rVert_{L^2(w_{t, \mathcal P})} \leq C \lVert  f  \rVert_{L^2(w_{t, \mathcal P})} \,, 
\end{equation}
for all $f \in L^2(\C)$. $C(t)$ is an increasing function of $t$.
\end{proposition}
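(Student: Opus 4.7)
\emph{Plan.} My strategy is to split $\S(\chi_{\overline{P}} f)$ into diagonal and off-diagonal parts relative to the decomposition $\chi_{\overline{P}} f = \sum_j \chi_{P_j} f$, and control each part using the structural hypotheses on $\mathcal{P}$. For $z \in P_i$, write
\begin{equation*}
\S(\chi_{\overline{P}} f)(z) = \S(\chi_{P_i} f)(z) + \sum_{j \ne i} \S(\chi_{P_j} f)(z).
\end{equation*}

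\emph{Diagonal part.} Since $w_{t,\mathcal{P}}$ restricted to $P_i$ is the constant $\ell(P_i)^{t-2}$ times Lebesgue measure, the $L^2(\C)$-boundedness of the Beurling transform gives
\begin{equation*}
\int_{P_i} |\S(\chi_{P_i} f)|^2 \, dw_{t,\mathcal{P}} = \ell(P_i)^{t-2}\int_{P_i}|\S(\chi_{P_i}f)|^2\,dA \le \ell(P_i)^{t-2}\int_{P_i}|f|^2\,dA = \int_{P_i} |f|^2 \, dw_{t,\mathcal{P}},
\end{equation*}
and summation over $i$ bounds the diagonal contribution by $\lVert f \rVert^2_{L^2(w_{t,\mathcal{P}})}$.

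\emph{Off-diagonal part.} The disjoint-triples hypothesis implies $|z-\tau| \gtrsim D_{ij} := \ell(P_i) + \ell(P_j) + \operatorname{dist}(P_i, P_j)$ for $z \in P_i$, $\tau \in P_j$ with $i \neq j$, whence
\begin{equation*}
|\S(\chi_{P_j} f)(z)| \le \frac{C}{D_{ij}^2}\int_{P_j} |f|\,dA \le \frac{C\,\ell(P_j)^{(4-t)/2}}{D_{ij}^2} \lVert f \rVert_{L^2(P_j,\, w_{t,\mathcal{P}})}.
\end{equation*}
By duality, the off-diagonal $L^2(w_{t,\mathcal{P}})$-estimate reduces to the $\ell^2$ boundedness of the matrix $M = (M_{ij})_{i \ne j}$ with entries $M_{ij} = D_{ij}^{-2}\,\ell(P_i)^{t/2}\,\ell(P_j)^{(4-t)/2}$. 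The $t$-packing condition enters through the estimate
\begin{equation*}
\sum_{j:\,D_{ij}\sim L} \ell(P_j)^a \le C L^a \qquad \text{for any } a \ge t,
\end{equation*}
obtained by applying $\norm \mathcal{P} .\textup{$t$-pack}. \le 1$ to a cube of side $\sim L$ containing the relevant $P_j$'s, together with $\ell(P_j) \le D_{ij}$. A dyadic decomposition over the scales $L = 2^k \ell(P_i)$ then feeds into Schur's test for $M$.

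\emph{Main obstacle.} The chief difficulty is that Schur's test with the natural symmetric weights $\lambda_i = \ell(P_i)^{t/2}$ (matching $\sqrt{w_{t,\mathcal{P}}(P_i)}$) lands precisely at a logarithmic borderline where the dyadic scale sum $\sum_k 1$ diverges. I would overcome this by exploiting Calder\'on--Zygmund smoothness of the Beurling kernel beyond its size: Taylor expanding $(z-\tau)^{-2}$ in $\tau$ around the center of $P_j$ and decomposing $f_j$ into its $L^2$-orthogonal polynomial components on $P_j$, the $n$-th such component satisfies the improved kernel bound of order $\ell(P_j)^{n+1}/D_{ij}^{n+2}$ for $z$ outside $3P_j$. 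Reassembling with Schur's test on the enlarged $(j,n)$-index and summing the resulting geometric decay in $n$ should restore strict (non-logarithmic) convergence and yield the desired bound, with a constant $C(t)$ that deteriorates as $t \to 2^-$, consistent with the statement that $C(t)$ is increasing in $t$.
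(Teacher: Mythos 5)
Your diagonal/off-diagonal split coincides with the paper's local/non-local decomposition, and the diagonal bound is the same. You also correctly identify the real difficulty: reducing the off-diagonal part to an $\ell^2$ matrix bound via the crude size estimate $|K(z,\tau)|\lesssim D_{ij}^{-2}$ and running Schur's test with weights $\lambda_i=\ell(P_i)^{\alpha}$ forces $\alpha\ge t/2$ (to apply the $t$-packing in one Schur sum) and simultaneously $\alpha<t/2$ (for geometric convergence of the dyadic scales in the other), so the test lands exactly at a logarithmic endpoint. That diagnosis is right.

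The proposed cure does not work, however. Expanding $f$ on $P_j$ in orthogonal polynomials and Taylor-expanding the kernel at the center of $P_j$ does give the gain $(\ell(P_j)/D_{ij})^n$ for the degree-$n$ component when $n\ge 1$, because that component has vanishing moments against lower-degree polynomials. But the $n=0$ (constant) component has no vanishing moment at all, and for it the Taylor expansion yields precisely $\mathcal S(\chi_{P_j})(z)\approx -|P_j|/\pi(z-c_j)^2$, i.e.\ no improvement over the size bound. The associated matrix entry is exactly the same $M_{ij}=\ell(P_i)^{t/2}\ell(P_j)^{(4-t)/2}/D_{ij}^2$ as before, so the $n=0$ stratum of your enlarged index reproduces the full logarithmic Schur obstruction unchanged. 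Kernel smoothness, and more generally Calder\'on--Zygmund cancellation, can only ever help terms with mean zero; the piece that was blocking you is exactly the mean. So summing the geometric decay in $n$ controls $n\ge 1$ but leaves $n=0$ as bad as it started.

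What the paper does instead is accept the logarithm and absorb it through a restricted weak-type interpolation argument in the style of Muscalu--Tao--Thiele. The non-local part is first dominated pointwise by a positive discretized operator $\operatorname S_{\mathcal Q'}$ built over a shifted dyadic grid of non-local cubes. One then proves, for subsets $F,G\subset\overline P$, a two-set estimate $\int_G\operatorname S_{\mathcal Q'}\chi_{F'}\,w_{t,\mathcal P}\,dx\le C_t\min\{w_{t,\mathcal P}(F),w_{t,\mathcal P}(G)\}$ after passing to the major subset $F'=F\cap\{\operatorname M_t\chi_G\le 2w(G)/w(F)\}$; iterating over major subsets yields a bound with a factor $\log(2+w(F)/w(G))$, which is then dominated by $C_{p,t}\,w(F)^{1/p}w(G)^{1-1/p}$ for any $1<p<\infty$, giving the restricted weak-type $(p,p)$ estimate, and real interpolation closes the gap at $p=2$. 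In short, the logarithm is genuine at the level of the crude matrix, cannot be removed by exploiting kernel smoothness, and must instead be traded for a polynomial loss that interpolation can eat.
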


The proof of this Proposition is presented in
Section~\ref{SectionProofPropositionWeightedNormInequalityBeurlingTransformV2}, 
and follows from elementary bounds on the Beurling operator, and combinatorial properties of the measure 
$ w _{t, \mathcal P}$.   This estimate is new, and does not follow from standard weighted theory. 
  The theory of $ A_2$ weights is built around the assumption that the weights are positive a.e.,
while the weights  $ w _{t, \mathcal P}$ are zero on a large set, and do not admit extensions to 
$ A_2$ weights uniformly in the $ A_2$ characteristic (Cf. Wolff's theorem in \cite[p.439]{garciacuervarubiodefrancia}.)

\section{The Proof of Proposition~\ref{PropositionPackingConstruction}}\label{SectionProofPropositionPackingConstruction}

Given $\varepsilon > 0$, by definition of dyadic Hausdorff content at dimension $t$, there exists a (possibly infinite) collection $\{ Q_n \}$ of closed dyadic cubes such that $E \subseteq \bigcup_{n} Q_n $, and
\begin{equation}\label{CoverEByDyadicCubesApproximatingContentEq1}
\sum_{n} \ell (Q_n)^t \leq \H^t_{\infty} (E) + \varepsilon\,.
\end{equation}

As usual, for $a>0$, denote by $aQ$ the cube concentric to the cube $Q$, but such that $\ell (aQ) = a \cdot \ell (Q)$. By compactness
of $E$, after relabeling indexes, there is a finite number $ N$ for which 
$E \subseteq \bigcup_{n=1}^{N} (3Q_n)^{\circ}$, where $A^{\circ}$
denotes the interior of the set $A$. Since each cube of the form $3Q_n$ is the union of $9$ dyadic cubes of the same
size as $Q_n$, we can 
write, after relabeling, $E \subseteq \bigcup_{n=1}^{N'} Q_n$, where
$Q_n$ are closed dyadic cubes (possibly with overlapping or repeated cubes.)

By selecting the maximal cubes
among the $ Q_n $, and eliminating those $ Q_n $ not intersecting $E$,  we can now assume,  after a relabeling,
that 
\begin{equation}\label{CoverEByDyadicCubesApproximatingContentEq2} \sum_{n=1}^{N} \ell (Q_n)^t \leq 9 \; \left(
\H^t_{\infty} (E) + \varepsilon \right), \end{equation} 
and that the cubes $Q_n$ are dyadic, intersect $E$, and have pairwise
disjoint interiors.

Let $\min \{ \ell (Q_n) \} = 2^{-M}$, 
and call a finite collection of cubes $\mathcal R $ \emph{admissible} denoted by $\mathcal R \in
\operatorname {Adms}$, if  (1) $ \mathcal R$ is a finite collection of dyadic cubes that intersect $E$, thus $ \mathcal R= \{ R_i \}_{i=1}^{H}$ 
for a finite $ H$ and $R_i \cap E \neq \emptyset$ for all $i$; (2)   $2^{-M} \leq \ell (R_i) \leq 1$; $ (3)$  $E \subseteq \bigcup_{i=1}^{H} R_i$; 
and  $ (4)$ they have pairwise disjoint interiors.

We have just seen that $ \operatorname {Adms}$ is non-empty.  The minimum 
$$\min \left\{ \sum_{R_i \in \mathcal R} \ell (R_i)^t :  \mathcal R \in \operatorname {Adms}  \right\} ,$$ 
is achieved, as 
there are only finitely many admissible collections of cubes.
Denote an admissible collection that achieves the minimum as  
$\mathcal T = \{ T_i \}_{i=1}^{M'}$. 
  By \eqref{CoverEByDyadicCubesApproximatingContentEq2}, we have 
\begin{equation}\label{ComparingCollectionOfCubesEq1}
\sum_{i=1}^{M'} \ell (T_i)^t \leq \sum_{j=1}^{N} \ell (Q_j)^t \leq 9 \; \left( \H^t_{\infty} (E) + \varepsilon \right)\,.
\end{equation}

Any minimizer also satisfies a local property: for any dyadic cube $Q$ such that $2^{-M} \leq \ell (Q) \leq 2^0$,
\begin{equation}\label{tPackingConditionForSizesBetween2MAnd1}
\sum_{T_i \subset Q} \ell (T_i)^t \leq \ell (Q)^t.
\end{equation}
Indeed, if $ Q$ intersects $ E$, 
and this inequality did not hold, the cube $Q$ would have been 
selected instead of the cubes $T_i$ with $T_i \subset Q$, contradicting the property of achieving the minimum. 
If the cube $Q$ does not intersect $ E$, then the inequality is trivial.

As an immediate consequence, we get that for any dyadic cube $Q$, irrespective of its size,
\begin{equation}\label{tPackingConditionForAllDyadicCubes} 
\sum_{T_i \subset Q} \ell (T_i)^t \leq \ell (Q)^t \,. 
\end{equation}
 In other words, the cubes $T_i$ satisfy $(c)$ in the statement of Proposition~\ref{PropositionPackingConstruction}.

Thus, $\mathcal T $ satisfies conditions $ (c)$ and $ (d)$ of the conclusion.  To accommodate $ (a)$ and $ (b) $ 
as well, fix an integer  $ m\in \mathbb N \setminus \{ 0 \}$, and fix a cube $T_i \in \mathcal T$. 
Subdivide $T_i$ into its $2^{2m+2}$ dyadic descendants of side-length $2^{-m-1} \ell (T_i)$. 
Let $\widehat{T_i}$ be the dyadic descendant of $T_i$ of side-length $2^{-m-1} \ell (T_i)$ 
whose upper right corner is the center of $T_i$. It is now easy to check that the cubes $\widehat{T_i}$ satisfy 
$(d)$ in the statement of Proposition~\ref{PropositionPackingConstruction} 
(with a larger constant $C$ than the constant obtained for the cubes $T_i$), as well as $(c)$, $(b)$ and $(a)$. Since $t<2$, notice that $C$, which depends on $m$, can be taken independent of $t$.

\section{ Weighted norm inequalities for the Beurling transform}
\label{SectionProofPropositionWeightedNormInequalityBeurlingTransformV2}

We prove the following estimate on the Beurling operator acting on  $ L ^{p} (w _{t, \mathcal P})$ spaces. Note that the same proof applies to any standard Calder\'{o}n-Zygmund singular integral, so we exhibit a whole new class of weights 
with respect to which singular integrals are bounded and yet do not admit extension to $A_p$ weights with uniformly bounded $A_p$ characteristic. For more on non-doubling measures see e.g. \cite[p.439]{garciacuervarubiodefrancia}, \cite{SaksmanLocalMappingHilbertTransform}, \cite{tolsasemiadditivityanalyticcapacity}, \cite{nazarovtreilvolbergTBTheoremNonHomogeneousSpacesActa}, \cite{TolsaWeightedNormInequalitiesCZOperatorsWithoutDoubling}, \cite{OrobitgPerezAPWeightsNonDoublingMeasures}, and the references therein.

\begin{lemma}\label{l.weak-type} Under the assumptions of
Proposition~\ref{PropositionWeightedNormInequalityBeurlingTransformV2}, for any $ 1<p < \infty $, 
and two subsets $ F, G \subset \overline P$, we have the estimate 
\begin{equation} \label{e.weak-type}
{ \int _{G} \lvert  \mathcal S \chi_{F} (x)\rvert \; w _{t, \mathcal P} dx  } 
\le C_{p,t} \lvert  F\rvert _{w _{t, \mathcal P}} ^{1/p} \lvert  G\rvert _{w _{t, \mathcal P}} ^{1-1/p} \,.   
\end{equation}
$C_{p,t}$ is a constant that only depends on $p$ and $t$. For fixed $p$, $C_{p,t}$ is an increasing function of $t$. 
\end{lemma}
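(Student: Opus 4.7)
The plan is to split $\mathcal{S}\chi_F$ into a local part (self-interaction within each cube of $\mathcal{P}$) and a far part (between distinct cubes), handle each by a different mechanism, and extract the full range $1 < p < \infty$ from two endpoint estimates using the elementary bound $\min(|F|_{w_{t,\mathcal{P}}},|G|_{w_{t,\mathcal{P}}}) \leq |F|_{w_{t,\mathcal{P}}}^{1/p}|G|_{w_{t,\mathcal{P}}}^{1/p'}$ valid for all $p \in [1,\infty]$.

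For $x \in \overline{P}$, let $P(x) \in \mathcal{P}$ be the cube containing $x$ and write $\mathcal{S}\chi_F = \mathcal{S}^{\text{loc}}\chi_F + \mathcal{S}^{\text{far}}\chi_F$, where $\mathcal{S}^{\text{loc}}\chi_F(x) := \mathcal{S}(\chi_F\chi_{P(x)})(x)$. On each $P_i$ the weight is the constant $\ell(P_i)^{t-2}$, so the classical $L^p(\R^2,dx)$-boundedness of the Beurling transform together with H\"{o}lder's inequality gives
\[
\int_{G\cap P_i} \bigl|\mathcal{S}^{\text{loc}}\chi_F\bigr|\, w_{t,\mathcal{P}}\, dx \leq C_p\,\bigl(|F\cap P_i|_{w_{t,\mathcal{P}}}\bigr)^{1/p}\,\bigl(|G\cap P_i|_{w_{t,\mathcal{P}}}\bigr)^{1/p'},
\]
the factor $\ell(P_i)^{t-2}$ being split as $\ell(P_i)^{(t-2)/p}\cdot\ell(P_i)^{(t-2)/p'}$ and distributed across the two factors. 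A second H\"{o}lder in $\ell^p$ when summing over $i$ then produces the local contribution $C_p |F|_{w_{t,\mathcal{P}}}^{1/p} |G|_{w_{t,\mathcal{P}}}^{1/p'}$, already of the required form.

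For the far piece, the pivotal input is the $t$-dimensional growth $w_{t,\mathcal{P}}(Q) \leq 16\,\ell(Q)^t$ furnished by \eqref{PackingConditionForMeasure} together with $\lVert\mathcal{P}\rVert_{t\text{-pack}}\leq 1$. The disjoint-triples hypothesis forces $|x-y| \geq \ell(P(x))$ whenever $x$ and $y$ lie in distinct cubes, and a standard layer-cake/integration-by-parts calculation against the kernel $1/\pi|x-y|^2$ produces the pointwise tail bound
\[
\bigl|\mathcal{S}^{*,\text{far}}(\chi_G w_{t,\mathcal{P}})(x)\bigr| \leq \frac{C}{2-t}\,\ell(P(x))^{t-2} = \frac{C}{2-t}\,w_{t,\mathcal{P}}(x), \qquad x \in \overline{P}.
\]
Dualizing with $\epsilon = \operatorname{sgn}(\mathcal{S}^{\text{far}}\chi_F)$ yields the endpoint estimate
\[
\int_G \bigl|\mathcal{S}^{\text{far}}\chi_F\bigr|\, w_{t,\mathcal{P}}\, dx = \int \chi_F\cdot \mathcal{S}^{*,\text{far}}(\epsilon\chi_G w_{t,\mathcal{P}})\, dx \leq \frac{C}{2-t}\,|F|_{w_{t,\mathcal{P}}},
\]
which is the $p = 1$ case for the far part; the factor $(2-t)^{-1}$ is precisely the source of the claimed monotonicity of $C_{p,t}$ in $t$.

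The main obstacle is obtaining the companion $|G|_{w_{t,\mathcal{P}}}$-endpoint for the far part: because $w_{t,\mathcal{P}}$ appears asymmetrically (only on the $x$-side of the bilinear form), the tail argument applied to the adjoint merely recycles the $|F|_{w_{t,\mathcal{P}}}$-bound. To overcome this I would refine the tail estimate by saturating the $t$-growth of $\chi_G w_{t,\mathcal{P}}$ at its total mass, i.e.\ using $(\chi_G w_{t,\mathcal{P}})(B(x,R)) \leq \min(16R^t,\,|G|_{w_{t,\mathcal{P}}})$, and interpolate the two resulting pointwise bounds with exponents $1/p$ and $1/p'$; the constraint $|F\cap P_j|_{w_{t,\mathcal{P}}} \leq \ell(P_j)^t$ coming from the same $t$-growth should then absorb the residual $\ell(P_j)$-factors in the dyadic sum and close the estimate. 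Combining the two far endpoints via the min-inequality with the local bound delivers the Lemma with $C_{p,t} \asymp C_p + C/(2-t)$, increasing in $t$ as claimed.
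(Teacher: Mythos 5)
Your decomposition into a local piece (interaction within a single $P_i$) and a far piece, and your treatment of the local piece via $L^p(dx)$-boundedness of $\mathcal S$ plus H\"older on each cube and then in $j$, coincide in spirit with the paper's local/non-local split, and that part of your argument is correct. Your far-part tail estimate giving $\int_G |\mathcal S^{\mathrm{far}}\chi_F|\,w\,dx \lesssim (2-t)^{-1}|F|_{w}$ is also correct. The gap is exactly where you flagged it: the companion $|G|_w$-endpoint, and your sketched repair does not close it.

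Concretely, the interpolation you propose leads (after using $|F\cap P_j| = \ell(P_j)^{2-t}|F\cap P_j|_w$) to a bound of the form
\begin{equation*}
\int_G |\mathcal S^{\mathrm{far}}\chi_F|\,w\,dx \;\lesssim\; \sum_{j} |F\cap P_j|_w\,\frac{\min\bigl(\ell(P_j)^t,\,|G|_w\bigr)}{\ell(P_j)^t}\,,
\end{equation*}
and with $a_j := |F\cap P_j|_w \le \ell(P_j)^t =: b_j$ the right-hand side cannot be dominated by $|F|_w^{1/p}|G|_w^{1/p'}$ in general: taking $N$ cubes of a common side length with $a_j=b_j=|G|_w$ gives a left side of order $N\,|G|_w$ against a right side of order $N^{1/p}|G|_w$, and the ratio $N^{1/p'}$ is unbounded. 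In fact the far endpoint in the $|G|_w$-direction is false without a logarithmic loss: the best available bound has the shape $\log\bigl(2+|F|_w/|G|_w\bigr)\,\min\{|F|_w,|G|_w\}$, so a ``two clean endpoints plus $\min(a,b)\le a^{1/p}b^{1/p'}$'' strategy cannot succeed here. The paper gets around this by a genuinely different device: it dominates the far part by a positive grid operator $S_{\mathcal Q'}$, introduces the $w$-maximal function $M_t$ over the grid, passes to a \emph{major subset} $F'\subset F$ on which $M_t\chi_G \le 2|G|_w/|F|_w$, proves the clean $\min$-bound for $F'$, and then iterates over major subsets roughly $\log(2+|F|_w/|G|_w)$ times; the resulting logarithm is absorbed into $|F|_w^{1/p}|G|_w^{1/p'}$ only at the very end (and only because $p>1$). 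That iteration scheme is the missing idea; your proposal reaches the point of identifying the obstruction but neither the pointwise min-of-two-bounds interpolation nor direct dualization supplies the second endpoint.
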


Here and throughout, $ \lvert  A\rvert _{w_{t, \mathcal P}}= w_{t, \mathcal P} (A) = \int _{A} w_{t, \mathcal P} dx  $. 
This is the restricted weak-type estimate for $ \mathcal S$ as a bounded operator from the Lorentz space 
$ L ^{p,1} (w _{t, \mathcal P})$ to $ L ^{p, \infty } (w _{t, \mathcal P})$.  A standard interpolation 
then proves Proposition~\ref{PropositionWeightedNormInequalityBeurlingTransformV2} (see e.g. Theorem 3.15 in \cite[p.197]{SteinWeissBook}.)  

To prove this, we split $ \mathcal S$ into a local and non-local part, $ \mathcal S = \mathcal S _{\textup{local}} 
+ \mathcal S _{\textup{non}}$, where  writing the kernel of $ \mathcal S$ as $ K (x,y)$, we define the 
kernel of $ \mathcal S _{\textup{local}}$ to be 
\begin{equation*}
K _{\textup{local}} (x,y)= K (x,y) \sum _{P\in \mathcal P} \chi _{  P} (x)\chi _{  P} (y)
\end{equation*}
On each $ P\in \mathcal P$, $ w _{t, \mathcal P}$ is a constant multiple of Lebesgue measure, hence we can estimate 
the local part directly, using the $ L ^{p } (dx)$-bound for $ \mathcal S$.  
\begin{align*}
\norm \mathcal S _{\textup{local}} f . L ^{p} (w _{t, \mathcal P}). ^{p} 
& = \sum _{P\in \mathcal P} \norm \mathcal \chi _{P} \; S _{\textup{local}} \left( \chi _{P} f \right) . L ^{p} (w _{t, \mathcal P}). ^{p} 
\\
& \le C_p \sum _{P\in \mathcal P} \norm  \chi _{P} f . L ^{p} (w _{t, \mathcal P}). ^{p} 
\\
& \le C_p \norm f. L ^{p } (w _{t, \mathcal P}). ^{p} \,. 
\end{align*}

On the non-local part, we abandon cancellation, and only use the homogeneity of the Beurling kernel.   It is 
also convenient to pass to a combinatorial analog of the non-local operator.  To this end, let us say that 
a collection of (not necessarily dyadic) cubes $ \mathcal Q$ is a \emph{grid} iff for all $ Q, Q'\in \mathcal Q$ we have 
$ Q\cap Q' \in \{\emptyset , Q, Q'\}$.  One can construct a collection of cubes $ \widetilde {\mathcal Q}$ 
so that these conditions hold
\begin{enumerate}
\item  $ \widetilde {\mathcal Q}$ is a union of at most $ 9$ grids. 
\item  For each dyadic cube $ P$ there is a cube $ Q\in \widetilde {\mathcal Q}$ with $ P\subset Q$, 
and $ \lvert  Q\rvert \le C \lvert  P\rvert  $. 
\item For each pair of dyadic cubes $ P , P' $ with $ 3 P\cap 3 P' = \emptyset $, there is a cube $ Q\in \widetilde {\mathcal Q}$ with $ P, P'\subset  Q$ and 
$ \lvert  Q\rvert \le C \operatorname {dist} (P,P') ^2  $. 
\end{enumerate}
Here, $ C$ is an absolute constant.

\begin{proof}
We recall a standard notion used e.g. in \cite[Section 5]{muscalutaothieleMulitilinearOperatorsGivenBySingularMultipliers}. Define a \emph{shifted dyadic mesh} in two dimensions to be 
\begin{equation*}
\widetilde {\mathcal Q} = 
\bigr\{2 ^{j} (k+(0,1) ^2 + (-1) ^{i} \alpha ) \mid i \in \{0,1 \}, j\in \mathbb Z ,\  k \in \mathbb Z ^2 , \ 
\alpha \in \{0, \tfrac 13, \tfrac 23 \} ^2 \bigr\}\,. 
\end{equation*}
Observe that for each cube $ Q\subset \mathbb R ^2 $, there is a $ Q'\in \widetilde {\mathcal Q}$ 
with $ Q\subset \tfrac 9 {10} Q'$ and $\ell (Q') \leq 9 \ell (Q)$.  This is easiest to check in one dimension. 
\end{proof}

Then, it follows that for all functions $ f$ supported on $ \overline P$, and a point $ x\in P$ with $ P\in \mathcal P$,  
\begin{align*}
\lvert  \mathcal S  _{\textup{non}}f  (x)\rvert
& \le  \sum _{\substack{P'\in \mathcal P\\ P'\neq P }} 
\int _{P'} \lvert   K (x,y) f (y)\rvert \; dy 
\\
&   \le C  \sum _{\substack{P'\in \mathcal P\\ P'\neq P }} 
\int _{P'} \lvert  f (y)\rvert \; \frac {dy} { \operatorname {dist} (P,P') ^2}  
\\
& \le C \operatorname S _{\widetilde {\mathcal Q}}   \lvert  f\rvert  (x)
\end{align*}
where we define for any collection of cubes $\mathcal Q $, 
\begin{equation}\label{e.SQ}
\operatorname S _{\mathcal Q} f (x) = \sum _{ \substack{ Q\in \mathcal Q \\ Q \text{  non-local} } } \frac { \chi _Q (x)} { \ell (Q) ^2 } \int _{Q} 
f (y)\;  dy \,. 
\end{equation}

Here we say that an arbitrary cube $Q$ (dyadic or not) with sides parallel to the coordinate axes is \emph{non-local} if there exist $P_1, P_2 \in \mathcal P$ such that $P_i \cap Q \neq \emptyset$ for $i=1,2$. It follows (since $3P_1 \cap 3P_2 = \emptyset$) that if $Q$ is non-local, then $\ell(P) \leq \ell(Q)$ if $P \in \mathcal P$ and $P \cap Q \neq \emptyset$.

Given the collection of cubes $\mathcal P$ (which is fixed throughout this section), and given a grid $\mathcal Q$, there is a unique subcollection of cubes $\mathcal Q' = \{ \text{ non-local cubes of the underlying grid } \mathcal Q \; \}$.

Thus, for the proof of Lemma~\ref{l.weak-type}, it suffices to consider only collections of cubes $\mathcal Q'$ (and we restrict our attention to such collections of cubes for the rest of this section) and to prove 

\begin{lemma}\label{l.Sweak} Under the assumptions of Lemma~\ref{l.weak-type}, 
for 
the collection of non-local cubes $\mathcal Q'$ associated to any grid $\mathcal Q$ 
we have the inequality
\begin{equation}\label{e.RestrictedWeakTypeEstimateForSQPrime}
\int _G  \left[ S _{\mathcal Q'} \chi _F  \right] \; w _{t, \mathcal P} dx 
\le C_{p,t} \lvert  F\rvert _{w _{t, \mathcal P}} ^{1/p} 
\lvert  G\rvert _{w _{t, \mathcal P}}  ^{1-1/p} \,, 
\qquad 1<p< \infty \,. 
\end{equation}
For fixed $p$, $C_{p,t}$ is an increasing function of $t$. 
\end{lemma}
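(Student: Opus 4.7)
The plan is to expand the bilinear form, reduce the Lebesgue integrals to weighted integrals by exploiting the $t$-dimensional structure of $w_{t,\mathcal P}$, and control the resulting weighted bilinear sum via a Calder\'on-Zygmund stopping-time decomposition in the grid underlying $\mathcal Q'$. Write $w=w_{t,\mathcal P}$ for brevity. First I would dualize:
\begin{equation*}
\int_G S_{\mathcal Q'}\chi_F\; w\,dx \;=\; \sum_{Q\in\mathcal Q'} \frac{|Q\cap F|\; w(Q\cap G)}{\ell(Q)^2}.
\end{equation*}
The key observation for turning this into a weighted sum is that every non-local $Q\in\mathcal Q'$ forces each $P_j\in\mathcal P$ meeting $Q$ to have $\ell(P_j)\leq\ell(Q)$: otherwise the triple $3P_j$ would contain both cubes $P_i,P_{i'}\in\mathcal P$ forcing non-locality, contradicting the disjoint-triples hypothesis on $\mathcal P$. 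On each such $P_j$ the density of $w$ equals $\ell(P_j)^{t-2}$, hence
\begin{equation*}
|Q\cap F| \;=\; \sum_{j:P_j\cap Q\ne\emptyset}\ell(P_j)^{2-t}\,w(Q\cap P_j\cap F) \;\leq\; \ell(Q)^{2-t}\,w(Q\cap F),
\end{equation*}
since $2-t>0$. Combined with the packing bound $w(Q)\leq 16\,\ell(Q)^t$ from \eqref{PackingConditionForMeasure}, matters reduce to proving
\begin{equation*}
\sum_{Q\in\mathcal Q'} \frac{w(Q\cap F)\; w(Q\cap G)}{w(Q)} \;\leq\; C_{p,t}\, w(F)^{1/p}\, w(G)^{1-1/p}.
\end{equation*}

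For this sum I would run a Calder\'on-Zygmund principal-cube decomposition simultaneously on $\chi_F$ and $\chi_G$: select collections $\mathcal J_F,\mathcal J_G\subset\mathcal Q'$ of maximal cubes on which the $w$-average of the respective function at least doubles compared to the previous stopping ancestor. Even in this non-doubling weighted setting the standard properties persist: for every $Q\in\mathcal Q'$, $\langle\chi_F\rangle_{Q,w}\leq 2\langle\chi_F\rangle_{\pi_F(Q),w}$ with $\pi_F(Q)$ the smallest element of $\mathcal J_F$ containing $Q$, and the residual sets $E(S)=S\setminus\bigcup\{S'\in\mathcal J_F:S'\subsetneq S\text{ maximal}\}$ are pairwise disjoint with $w(E(S))\geq\tfrac12 w(S)$. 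Grouping the sum by the pair $(\pi_F(Q),\pi_G(Q))$ collapses the nested chains into the sparse bilinear form
\begin{equation*}
\sum_{S\in\mathcal J_F,\,S'\in\mathcal J_G}\langle\chi_F\rangle_{S,w}\,\langle\chi_G\rangle_{S',w}\,w\bigl(E(S)\cap E(S')\bigr),
\end{equation*}
which by H\"older and the $L^p(w)$-boundedness of the weighted dyadic maximal operator yields the desired restricted weak-type estimate. Because the exponent $2-t$ deteriorates as $t\uparrow 2$, the resulting constant $C_{p,t}$ is increasing in $t$, as claimed.

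The main obstacle is this final bilinear control: because $w$ is not doubling and the grid $\mathcal Q$ can contain arbitrarily long nested chains above a point of $\overline P$, a naive grouping of the sum by a single stopping family introduces a logarithmic factor in the grid depth. The two-sided stopping decomposition is designed precisely to eliminate this factor, exploiting the geometric doubling of stopping $w$-averages together with the sparsity bound $w(E(S))\geq\tfrac12 w(S)$. This is also where the $t$-packing hypothesis on $\mathcal P$ plays its essential non-classical role, producing a class of weights on which singular integrals remain bounded despite the absence of any $A_p$ extension (cf.~Wolff's theorem).
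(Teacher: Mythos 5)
Your reduction to the purely weighted bilinear sum $\sum_{Q\in\mathcal Q'}\frac{w(Q\cap F)\,w(Q\cap G)}{w(Q)}$ is where the argument breaks down: that sum can be infinite. The inequality $\ell(Q)^{-t}\leq 16/w(Q)$ coming from \eqref{PackingConditionForMeasure} is valid, but it is lossy in exactly the regime that matters. Take $\mathcal P=\{P_1,P_2\}$ two unit dyadic cubes with disjoint triples, so $w=\chi_{P_1\cup P_2}\,dA$, and let $F=G=\overline P$. The non-local cubes in a grid include an infinite nested chain $Q_0\subset Q_1\subset\cdots$ with $\overline P\subset Q_0$. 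For each $Q_k$ one has $w(Q_k\cap F)=w(Q_k\cap G)=w(Q_k)=w(\overline P)$, so every term of your reduced sum equals $w(\overline P)$ and the series diverges, even though the original bilinear form $\sum_Q|Q\cap F|\,w(Q\cap G)/\ell(Q)^2$ is perfectly finite there (it decays like $\ell(Q_k)^{-2}$). A stopping-time decomposition cannot repair this: all of the $Q_k$ share the same stopping parents $(\pi_F,\pi_G)$ because the $w$-averages along the chain are constant, yet each $Q_k$ contributes a full-size term, so the claimed ``collapse'' into a single principal term per pair $(S,S')$ does not occur. The hard part is precisely the summability over nested chains, and your reduction throws away the only source of decay.

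The paper's proof keeps the decomposition asymmetric and that is the essential point you missed. It splits $\ell(Q)^{-2}=\ell(Q)^{-(2-t)}\cdot\ell(Q)^{-t}$, applies the packing/maximal-function bound only to the single factor $w(G\cap Q)/\ell(Q)^t$ (bounding it by $\min\{16,\,32\,w(G)/w(F)\}$ after passing to a major subset $F'\subset F$ selected via $\operatorname M_t\chi_G$), and crucially leaves the other factor in Lebesgue measure: $|F'\cap Q|/\ell(Q)^{2-t}$. Because non-locality forces $\ell(P)\leq\ell(Q)$ for every $P\in\mathcal P$ meeting $Q$, and because $2-t>0$, the inner sum $\sum_{Q\cap P\neq\emptyset}\ell(Q)^{-(2-t)}\lesssim_t\ell(P)^{t-2}$ converges geometrically, turning $\sum_Q|F'\cap Q|/\ell(Q)^{2-t}$ into $C_t\,w(F')$. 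This is exactly the geometric decay that disappears once you convert both factors to $w$-measure. The paper then iterates the $F\mapsto F'$ restricted-type estimate to accumulate a logarithmic factor $\log(2+w(F)/w(G))$, which is absorbed into $w(F)^{1/p}w(G)^{1-1/p}$ — a different combinatorial scheme from your two-sided stopping time, and one that is tailored to exploit the asymmetry. To salvage your route you would have to retain the factor $\ell(Q)^{-(2-t)}$ (or equivalently the Lebesgue measure $|Q\cap F|$) through the stopping decomposition rather than replacing it by $w(Q\cap F)/w(Q)$.
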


We turn to the proof.   There are two points to observe. 
Consider the $ w_{t,\mathcal P}$-maximal function defined by 
\begin{equation}\label{e.Mw}
\operatorname M_{t} g = \sup _{Q \in \mathcal Q'} \frac {\chi _{Q}} {\lvert Q \rvert_{w_{t,\mathcal P}}} \int _Q g (y) 
w _{t, \mathcal P} (y) \; dy 
\end{equation}
This operator maps $ L ^{1} (w_{t,\mathcal P})$ to $ L ^{1, \infty } (w_{t,\mathcal P})$, that is, 
\begin{equation}\label{e.Mw1}
\lambda \lvert \{ \operatorname M _t g > \lambda \} \rvert_{w_{t,\mathcal P}} \le \norm g. L ^{1} (w_{t,\mathcal P}).  \,, \qquad 0<\lambda <\infty \,. 
\end{equation}
Indeed, this is a maximal inequality  true for all weights, and follows immediately from the 
usual Covering Lemma proof, which is quite simple in this  context, as $ \mathcal Q$ is a grid. 

For $ F, G \subset \overline P $, if $ 8 \lvert  F\rvert _{w _{t, \mathcal P}}  
\le \lvert  G\rvert _{w _{t, \mathcal P}} $, we take $ F'=F$.  Otherwise we define 
\begin{equation}\label{e.G''}
F'= F \cap \bigl\{ \operatorname M_t \chi_{G} \leq 2 w_{t, \mathcal P}(G)/w_{t, \mathcal P} (F) \bigr\}\,. 
\end{equation}
By the weak-$L ^{1} (w _{t, \mathcal P}) $ inequality for $ \operatorname M _{t}$ we 
see that $ \lvert  F' \rvert _{w _{t, \mathcal P}} \ge \tfrac 12 \lvert  F\rvert _{w _{t, \mathcal P}}  $. 
(In the argot of \cite[Section 3]{muscalutaothieleMulitilinearOperatorsGivenBySingularMultipliers}, $ F'$ is a major subset of $ F$.) 
  We show that 
\begin{equation}\label{e.''}
\int _{G} \left[ \operatorname S _{\mathcal Q'} \chi_{F'} \right] \; w _{t, \mathcal P} dx 
\le C_t
 \min \{ w_{t,\mathcal P}(F) \,,\, w_{t, \mathcal P}(G)\}\,. 
\end{equation}
Upon iteration of inequality \eqref{e.''}, we see that we actually have inequality \eqref{e.''}, with $ F'=F$ 
on the left hand side of the inequality, and $C_t$ replaced by $C_t \log \left( 2+ \frac{\lvert  F\rvert _{w _{t, \mathcal P}}}{\lvert  G\rvert _{w _{t, \mathcal P}}} \right)$. Indeed, with $F = F_0$ and $F' = F_1$ we now apply \eqref{e.''} with $F_0$ replaced by $F_0 \setminus F_1$, and $F_2$ the corresponding major subset of $F_0 \setminus F_1$. We continue the iteration until $ 8 \lvert  F_n \rvert _{w _{t, \mathcal P}}  
\le \lvert  G\rvert _{w _{t, \mathcal P}} $, which occurs with $n \lesssim \log \left( 2+ \frac{\lvert  F\rvert _{w _{t, \mathcal P}}}{\lvert  G\rvert _{w _{t, \mathcal P}}} \right)$. From this inequality we immediately obtain \eqref{e.RestrictedWeakTypeEstimateForSQPrime}:
\begin{equation}\label{e.'''}
\int _{G} \left[ \operatorname S _{\mathcal Q'} \chi_{F} \right] \; w _{t, \mathcal P} dx 
\le C_t \log \left( 2+ \frac{\lvert  F\rvert _{w _{t, \mathcal P}}}{\lvert  G\rvert _{w _{t, \mathcal P}}} \right)
 \min \{ w_{t,\mathcal P}(F) \,,\, w_{t, \mathcal P}(G)\}\, \leq C_{p,t} \lvert  F\rvert _{w _{t, \mathcal P}} ^{1/p} 
\lvert  G\rvert _{w _{t, \mathcal P}}  ^{1-1/p} \,, 
\end{equation}
for $ 1<p< \infty \,$, which reduces the proof of Lemma~\ref{l.Sweak} to showing \eqref{e.''}.

We now turn to the proof of \eqref{e.''}. 
\begin{align}
\int _{G} \left[ \operatorname S _{\mathcal Q'} \chi_{F'} \right] \; w _{t, \mathcal P} \; dx & = 
\sum _{Q\in \mathcal Q' } 
\frac { \lvert   F' \cap Q \rvert} { \ell (Q) ^2 }  \lvert G \cap Q\rvert _ {w _{t, \mathcal P}} =
%
\sum _{Q\in \mathcal Q' } 
\frac { \lvert   F' \cap Q \rvert} { \ell (Q) ^{2-t} } \; \frac{ \lvert G \cap Q\rvert _ {w _{t, \mathcal P}} }{ \ell (Q) ^{t} }  \nonumber \\
& \leq  \label{e.SSSS2}
\min \left\{ 16 \;\norm \mathcal P . \textup{$t$-pack}. ^{t} ,  32 \frac{ w_{t, \mathcal P}(G)}{w_{t, \mathcal P} (F)} \right\} \; \sum _{Q\in \mathcal Q' } \frac { \lvert   F' \cap Q \rvert} { \ell (Q) ^{2-t} } := A \; \sum _{Q\in \mathcal Q' } \frac { \lvert   F' \cap Q \rvert} { \ell (Q) ^{2-t} }  \\
& = A \; \sum _{Q\in \mathcal Q' } \; \sum_{P : P \cap Q \neq \emptyset} \frac{\lvert   F' \cap P \cap Q \rvert}{\ell (Q) ^{2-t}} \leq A \; \sum _{P \in \mathcal P } \sum _{ \substack{ Q\in \mathcal Q' \\ Q \cap P \neq \emptyset} } \lvert   F' \cap P \rvert \; \frac{1}{\ell (Q)^{2-t}}  \\
& \leq \label{e.SSSS3}
A \; C_t \; \sum _{P \in \mathcal P } \frac{\lvert   F' \cap P \rvert}{\ell (P)^{2-t}} = 
A \; C_t \; \lvert F' \rvert_{w_{t, \mathcal P}}  \\
& \leq C_t' \; \min \{ w_{t,\mathcal P}(F') \,,\, w_{t, \mathcal P}(G)\}\, \leq C_t' \; \min \{ w_{t,\mathcal P}(F) \,,\, w_{t, \mathcal P}(G)\}\,. 
\end{align}

In passing to \eqref{e.SSSS2}, we have used the packing condition (see \eqref{PackingConditionForMeasure}) and the definition of 
$ F'$  in \eqref{e.G''}, to wit if $ \lvert  Q\cap F'\rvert \neq 0  $, then necessarily 
\begin{equation*}
 \frac {\lvert  G\cap Q\rvert _{w_{t, \mathcal P}} } 
 { \ell   (Q) ^{t}  }
 \le 16
  \frac {\lvert  G\cap Q\rvert _{w_{t, \mathcal P}} } 
 { \lvert  Q \rvert _{w_{t,\mathcal P}}   } 
\le 32 \frac {\lvert G\rvert_{w_{t, \mathcal P}} } {\lvert F\rvert_{w_{t, \mathcal P}}} \,.  
\end{equation*}
In passing to \eqref{e.SSSS3}, we have used that for any fixed scale $2^{-\ell}$, there are at most $4$ cubes $Q \in \mathcal Q'$ such that $Q \cap P \neq \emptyset$ and $\ell(Q) = 2^{-\ell}$, and also that any such $Q$ satisfies $\ell(Q) \geq \ell(P)$, since $Q$ is non-local. Note that $C_t$ and $C_t'$ are increasing functions of $t$.

\section{The Proof of Lemma~\ref{PropositionAbsoluteContinuityOfHausdorffMeasuresUnderPushForwardByQCMapsSmallDilatationV2}}
\label{SectionProofPropositionAbsoluteContinuityOfHausdorffMeasuresUnderPushForwardByQCMapsSmallDilatationV2}

We use a familiar scheme, which we recall here.   We have already seen how to approximate the 
$ t$-Hausdorff content of a set $E $ by a finite union of cubes.  We can therefore assume that 
$ E$ is in fact a finite union of cubes, and we approximate the Hausdorff content of the image of 
$ E$. 
Applying Stoilow factorization methods, a normalized version of the mapping 
$\phi$ is written as $\phi= \phi_1 \circ h$, where both $h,\phi_1:\C\rightarrow\C$ are principal $K$-quasiconformal mappings, such that $h$ is conformal in the complement of the set $E$ and $\phi_1$ is conformal on the set $F=h(E)$.
One then studies the mapping properties of the two functions $ \phi _1 $ and $ h$ separately, referred to 
the `conformal inside' and the `conformal outside' parts, respectively. Recall that a principal $K$-quasiconformal mapping is a $K$-quasiconformal mapping that is conformal outside 
$\overline{\D}$ and is normalized by $\phi(z)-z=O\left(\frac{1}{|z|}\right)$ as $|z|\to\infty$.

The conformal inside part has already been addressed, in 
\cite{astalaclopmateuorobitguriartepreprint}, and we recall the relevant result in 
 Theorem~\ref{QCConformalInside} below.   The conformal outside part is new, and the point we turn to now.

The following lemma is often used in the theory of extrapolation of $A_p$ weights, 
and we use it in a similar way to the way it is used in that theory.

\begin{lemma}\label{HoelderInequalityForPLessThan1}
Let $f,g \geq 0$ be measurable functions. Then, if $0<p<1$,
\begin{equation}\label{HoelderInequalityForPLessThan1Inequality}
\int fg \geq\lVert  f  \rVert_{p}\lVert  g  \rVert_{p'}, 
\end{equation}
where $\frac{1}{p} + \frac{1}{p'} =1$ (hence $p' <0$), 
$\lVert  f  \rVert_{p} = \left( \int |f|^p \right)^{\frac{1}{p}}$, and
\begin{equation*}
\lVert  g  \rVert_{p'} = \left( \int |g|^{p'} \right)^{\frac{1}{p'}} = 
\frac{1}{ \left( \int \frac{1}{|g|^{-p'}} \right)^{\frac{1}{-p'}} } \,. 
\end{equation*}
As a consequence, 
\begin{equation}\label{HoelderInequalityForPLessThan1Equality}
\lVert  f  \rVert_{p} = \inf_{ \substack{ g \;:\;  \lVert  g  \rVert_{p'} =1 }  } \int fg.
\end{equation}

\end{lemma}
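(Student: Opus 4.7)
The plan is to reduce the reverse-H\"older inequality \eqref{HoelderInequalityForPLessThan1Inequality} to the classical forward H\"older inequality by a standard multiplicative trick, and then to obtain the variational identity \eqref{HoelderInequalityForPLessThan1Equality} by exhibiting an explicit extremizer. First I would (working on a truncation where $f$ and $g$ are bounded above and below away from $0$, then removing the truncation at the end via monotone convergence) use the trivial identity
\[
f^{p}=(fg)^{p}\cdot g^{-p}
\]
and apply ordinary H\"older with the conjugate pair $q=1/p>1$, $q'=1/(1-p)>1$ to obtain
\[
\int f^{p}\,dx\le\Bigl(\int fg\,dx\Bigr)^{p}\Bigl(\int g^{-p/(1-p)}\,dx\Bigr)^{1-p}.
\]

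The second step is pure bookkeeping with the negative exponent $p'=p/(p-1)$. Since $-p/(1-p)=p'$ and $p'(1-p)=-p$, the last factor equals $\lVert g\rVert_{p'}^{-p}$ in the negative-exponent convention of the lemma. Extracting the $p$-th root and rearranging then yields $\lVert f\rVert_{p}\,\lVert g\rVert_{p'}\le\int fg$, which is exactly \eqref{HoelderInequalityForPLessThan1Inequality}.

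For \eqref{HoelderInequalityForPLessThan1Equality}, the direction $\lVert f\rVert_{p}\le\inf\int fg$ is just a restatement of what was proved. For the reverse direction I would plug in the explicit extremizer $g_{0}=\lVert f\rVert_{p}^{1-p}\,f^{p-1}$; using $(p-1)p'=p$, a one-line computation gives simultaneously $\lVert g_{0}\rVert_{p'}=1$ and $\int f\,g_{0}\,dx=\lVert f\rVert_{p}$, so the infimum is attained.

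There is essentially no serious obstacle. The only mild subtlety is the behavior of the negative-exponent \textquotedblleft norm\textquotedblright\ at zeros and infinities of $f$ and $g$, which is handled by the truncation-and-monotone-convergence argument mentioned above and which poses no genuine difficulty because $t\mapsto t^{p'}$ is monotone for $p'<0$.
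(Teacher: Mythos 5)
Your proof is correct and follows essentially the same route as the paper: the reverse inequality is reduced to the classical H\"older inequality via the factorization $f^p=(fg)^p g^{-p}$ with conjugate exponents $1/p$ and $1/(1-p)$, and the variational identity \eqref{HoelderInequalityForPLessThan1Equality} is established by the same explicit extremizer $g_0=f^{p-1}/\lVert f\rVert_p^{p-1}$ that the paper uses.
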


\begin{proof} The inequality \eqref{HoelderInequalityForPLessThan1Inequality} follows easily from the usual H\"{o}lder's
inequality (i.e. with $p>1$.) The case of equality in \eqref{HoelderInequalityForPLessThan1Equality} is attained by
taking $g = \frac{f^{p-1}}{ \lVert f \rVert_{p}^{p-1} }$.

\end{proof}

We will use the following notation. For a finite collection of pairwise disjoint dyadic cubes $\mathcal P = \{ P_j  \}_{j=1}^{N}$, let
\begin{equation}\label{tefinitionBetaJ}
\beta_j = \frac{ \left[ \ell(P_j)^2 \right]^{( \frac{t}{2} -1 )} }{  \left\{ \sum_{k=1}^{N} \left[ \ell(P_k)^2 \right]^{\frac{t}{2}}   \right\}^{( \frac{t}{2} -1 ) \frac{2}{t}  }  }.
\end{equation}
(Compare with $g$ in the proof of Lemma \ref{HoelderInequalityForPLessThan1}.) Also, let $E = \overline{P} = \bigcup_j P_j$, let 
\begin{equation}\label{tefinitionTildeWSubE}
\widetilde w _{t, \mathcal P} (x) = \sum_{j} \beta_j \cdot \chi_{P_j}(x), 
\end{equation}
which is a constant multiple of $ w _{t,\mathcal P}$, as defined in \eqref{tefinitionWSubEV2}.

The conformal outside Lemma states that the quasi-conformal image of $ \mathcal P$ 
has controlled distortion, in the $\ell^{t}$-quasinorm.

\begin{lemma}\label{EstimateForConformalOutside} Let $ 0<t<2$.  There is a positive constant $ \varepsilon _0$ (which is a decreasing function of $t$) so that the following holds. 

Let $\mathcal P = \{ P_j \}_{j=1}^{N}$ be a finite collection of dyadic cubes which satisfy the $t$-Carleson packing condition 
$\norm \mathcal P . \textup{$t$-pack}. \leq C$. 
Assume further that the cubes $3 \; P_j$ are pairwise disjoint.

Let $E = \overline{P} = \bigcup_{j} P_j$ and let $f: \C \to \C$
be a principal $K$-quasiconformal mapping which is conformal outside the compact set $E$, 
with $ \frac{K-1}{K+1} < \varepsilon _0$. 

Then, there is a constant $C(K,t)$ which depends only on  $K$ and $t$ (which, for fixed $K$, is an increasing function of $t$) such that
\begin{equation}\label{MorallyEstimateForDHausdorffContent}
\sum_{j=1}^{N} \diam(f(P_j))^t \leq C(K,t) \sum_{j=1}^{N} \ell(P_j)^t.
\end{equation}

\end{lemma}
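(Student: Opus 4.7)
The plan is to run Astala's Neumann-series scheme for the Beltrami equation, but in the weighted space $L^2(w_{t,\mathcal P})$, with Proposition~\ref{PropositionWeightedNormInequalityBeurlingTransformV2} supplying the driving bound. Since $f$ is principal and conformal outside $E=\overline P$, its Beltrami coefficient $\mu:=\bar\partial f/\partial f$ is supported in $\overline P$ with $\lVert\mu\rVert_\infty\le k:=(K-1)/(K+1)<\varepsilon_0$, and the principal normalization yields the fixed-point identity $\partial f=1+\mathcal S(\chi_{\overline P}\mu\,\partial f)$.

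The core step is a weighted $L^2$ estimate on $\partial f$. Taking $L^2(w_{t,\mathcal P})$-norms in this fixed-point identity, applying Proposition~\ref{PropositionWeightedNormInequalityBeurlingTransformV2}, and using $|\mu|\le k$ gives
\begin{equation*}
\bigl\lVert\partial f\bigr\rVert_{L^2(w_{t,\mathcal P})} \;\le\; \bigl\lVert\chi_{\overline P}\bigr\rVert_{L^2(w_{t,\mathcal P})} + C(t)\,k\,\bigl\lVert\partial f\bigr\rVert_{L^2(w_{t,\mathcal P})},
\end{equation*}
where one uses that $w_{t,\mathcal P}$ is supported in $\overline P$ so that the outer cut-off $\chi_{\overline P}$ is automatic. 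Choosing $\varepsilon_0=\varepsilon_0(t)$ so small that $C(t)\,\varepsilon_0<\tfrac12$, the second term can be absorbed, and since $\int_{P_j}w_{t,\mathcal P}\,dA=\ell(P_j)^t$ we obtain the core estimate
\begin{equation*}
\int_E|\partial f|^2\,w_{t,\mathcal P}\,dA \;\le\; 4\sum_{j=1}^N\ell(P_j)^t.
\end{equation*}

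The rest is bookkeeping. Since $f$ is globally $K$-quasiconformal, hence $\eta(K)$-quasisymmetric, one has $\diam(f(P_j))^2\le C(K)|f(P_j)|\le C(K)\int_{P_j}|\partial f|^2\,dA$, using $J_f\le|\partial f|^2$. Writing $A_j:=\int_{P_j}|\partial f|^2\,dA$, the factorization $A_j^{t/2}=(A_j\,\ell(P_j)^{t-2})^{t/2}\,\ell(P_j)^{(2-t)t/2}$ followed by H\"older's inequality with conjugate exponents $2/t$ and $2/(2-t)$---equivalently, Lemma~\ref{HoelderInequalityForPLessThan1} applied with the weights $\beta_j$ of \eqref{tefinitionBetaJ}---yields
\begin{equation*}
\sum_j A_j^{t/2} \;\le\; \Bigl(\int_E|\partial f|^2\,w_{t,\mathcal P}\,dA\Bigr)^{t/2}\Bigl(\sum_j\ell(P_j)^t\Bigr)^{(2-t)/2}.
\end{equation*}
Inserting the core estimate gives $\sum_j\diam(f(P_j))^t\le C(K,t)\sum_j\ell(P_j)^t$, as required.

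The main obstacle lies entirely in the core step. The $t$-Carleson packing hypothesis is precisely what permits application of Proposition~\ref{PropositionWeightedNormInequalityBeurlingTransformV2} with a finite constant $C(t)$, and the smallness of dilatation is what closes the absorption $C(t)\,k<\tfrac12$. Since $C(t)$ is increasing in $t$, $\varepsilon_0(t)$ must be allowed to decrease in $t$, matching the statement of the lemma; the passage to general $K$ in the Main Theorem is then handled by the Stoilow factorization already recorded in Section~\ref{PlanAttackAstalasApproach}.
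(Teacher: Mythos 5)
Your proof is correct and follows essentially the same route as the paper: the weighted Beurling bound of Proposition~\ref{PropositionWeightedNormInequalityBeurlingTransformV2} together with small dilatation controls the weighted $L^2$ norm of $\partial f$ (you use a one-step absorption in the fixed-point identity, the paper unrolls the same estimate as the Neumann series for $f_{\bar z}=(\operatorname{Id}-\mu\S)^{-1}\mu$ and then tracks $I_1, I_2, I_3$), after which a H\"older pairing (your ordinary H\"older with exponents $2/t$ and $2/(2-t)$, the paper's reverse-H\"older Lemma~\ref{HoelderInequalityForPLessThan1} with the weights $\beta_j$) converts the $L^2(w_{t,\mathcal P})$ bound into the desired $\ell^t$ estimate on the diameters. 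The only point worth recording when replacing the explicit series by absorption is the a priori finiteness $\lVert\partial f\rVert_{L^2(w_{t,\mathcal P})}<\infty$, which does hold here since $w_{t,\mathcal P}$ is a bounded, compactly supported weight and $\partial f\in L^2_{\mathrm{loc}}$ by quasiconformality.
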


Prause \cite{prausedistortionhausdorffmeasuresunderqcmaps} proved results somewhat in the spirit of
Lemma~\ref{EstimateForConformalOutside} below, but for different Hausdorff measures, which give a weaker conclusion than
the statement \eqref{abscont}.
Our Lemma, and in particular the hypothesis on $ t$-packing,  is informed by the 
counterexample of  Bishop \cite{bishopdistortionofdisksbyconformalmaps}.

\begin{proof}


By Lemma~\ref{HoelderInequalityForPLessThan1},
with $\beta_j$ as in \eqref{tefinitionBetaJ}, and $\widetilde w _{t, \mathcal P} (x)$, as in \eqref{tefinitionTildeWSubE} 
, by quasi-symmetry  we get
\begin{align} \label{xx}
\left(  \sum_{j=1}^{N} \diam(f(P_j))^t \right)^{\frac{2}{t}} 
& =  \inf_{ \substack{ \alpha_j >0  
\\  1= \lVert \left\{  \alpha_j  \right\} \rVert_{   \ell ^{(t/2)'} }  } }  
\left\{  \sum_{j=1}^{N}  \diam(f(P_j))^2 \;  \alpha_j  \right\}  \\
& \leq   \sum_{j=1}^{N}  \diam(f(P_j))^2 \;  \beta_j 
\\  
&\leq   
C(K) 
\ \int_{E} J(z,f) \; \widetilde w _{t, \mathcal P}(z) \; dA(z)  
\end{align}
Here $J(z,f)$ denotes the Jacobian (determinant) of $f$ at $z$. 

We  follow Astala's approach for his area distortion theorem \cite[p.50]{astalaareadistortion} (see
also \cite{astalaiwaniecmartin}), equipped with the new results of this paper.
The central role of the Beurling operator is indicated by the identity 
\begin{equation}\label{FZAndFZBarRelatedByBeurling} 
f_{z} = 1 + \S(f_{\overline{z}}). 
\end{equation}
Using the trivial inequality $|2 \operatorname{Re}(a)| \leq 2|a| \leq |a|^2 +1$, and that $J(z,f) = |f_{z}|^2 - |f_{\overline{z}}|^2$ (see e.g. (9) in \cite[p.6]{ahlforsEdition2}, or \cite{astalaiwaniecmartin}), we can estimate 
\begin{align} \notag
\int_{E} J(z,f) \; \widetilde w _{t, \mathcal P} (z) \; dA(z) & = 
\int_{E} ( |f_{z}|^2 - |f_{\overline{z}}|^2 ) \; \widetilde w _{t, \mathcal P} (z) \; dA(z)  \\
& =   \int_{E} ( 1 + 2 \operatorname{Re}(\S(f_{\overline{z}})) +  |\S(f_{\overline{z}})|^2  - 
|f_{\overline{z}}|^2  ) \; 
\widetilde w _{t, \mathcal P} (z) \; dA(z) 
\\ \notag
& \leq  2 \int_{E} ( 1 +  |\S(f_{\overline{z}})|^2  
) \; \widetilde w _{t, \mathcal P} (z) \; dA(z)  
\\ \notag
& =  2 \Big\{  \int_{E}  \widetilde w _{t, \mathcal P} (z) \; dA(z) +
 \int_{E}  | \S(f_{\overline{z}}) |^{2}  \; \widetilde w _{t, \mathcal P} (z) \; dA(z)   \Big\}
 \\
\label{EstimateIntegralOfJacobianUsingBeurlingTransformIdentityAndSplittingInto3Parts}
&= 2 \left\{ \operatorname I_{1} + \operatorname I_{2} 
\right\} \,.
\end{align}

Notice that
$
\operatorname I_{1} = \sum_{j=1}^{N}  \ell (P_j)^2 \;  \beta_j 
$.  We shall bound the other term by a multiple of $ I_1$. Indeed, 
with respect to $\operatorname I_{2}$, since $\widetilde w _{t, \mathcal P}$ and $w_{t, \mathcal P}$ only differ by a multiplicative constant 
the Beurling operator has the same operator norm on $ L ^2 (\widetilde w _{t, \mathcal P})$ and 
$ L ^2 ( w _{t, \mathcal P})$. 
And so by Proposition~\ref{PropositionWeightedNormInequalityBeurlingTransformV2}, 
\begin{equation}\label{EstimateForTermI2InTermsOfI3}
\operatorname I_{2} = \int_E  |\S(f_{\overline{z}})|^{2} \; \widetilde w _{t, \mathcal P} (z) \; dA(z) \leq C(t) \int_{E}  |f_{\overline{z}}|^{2}  \; \widetilde w _{t, \mathcal P} (z) \; dA(z) =: C(t) \; \cdot \operatorname I_{3}
\end{equation}

Turning to $ I_3$, the Beurling operator is again decisive.  
Recall  the representation of $f_{\overline{z}}$ as a power series in the Beltrami coefficient $\mu$. Namely,
\begin{equation}\label{PowerSeriesOfFZBarInTermsOfMu}
f_{\overline{z}} = \mu f_{z} = \mu + \mu \S (\mu) + \mu \S (  \mu \S ( \mu ) ) + \cdots
\end{equation}
 This  is obtained upon multiplying \eqref{FZAndFZBarRelatedByBeurling} by $\mu$, writing 
$
f_{\overline{z}} = \left( \operatorname{Id}  - \mu \S \right)^{-1} (\mu)
$ 
and using the standard Neumann series 
\begin{equation}\label{ExpressionInverseIdMinusMuSAsNeumannSeries}
\left( \operatorname{Id}  - \mu \S \right)^{-1} = \operatorname{Id} + \mu \S + \mu \S \mu \S + \mu \S \mu \S \mu \S +
\cdots \,. 
\end{equation}
As we shall see, this series converges in $ L ^{2} ( w _{t,\mathcal P} )$ for small (depending on $t$) $\parallel \mu \parallel_{\infty}$ by Proposition \ref{PropositionWeightedNormInequalityBeurlingTransformV2}.

Observe the two inequalities 
\begin{align}\label{EstimateFirstTermPowerSeriesOfFZBarInTermsOfMu}
\left(  \int_{E} |\mu|^{2}  \; \widetilde w _{t, \mathcal P} (z) \; dA(z)  \right)^{\frac{1}{2}} &\leq \lVert \mu  \rVert_{\infty} \left(  \int_{E} \chi_{E} \cdot\widetilde w _{t, \mathcal P} (z) \; dA(z)  \right)^{\frac{1}{2}} =  \lVert\mu  \rVert_{\infty} \left( \operatorname I_{1} \right)^{\frac{1}{2}} \,,
\\ 
\label{EstimateKThTermPowerSeriesOfFZBarInTermsOfMu}
\left(  \int_{E} |\mu \S (g)|^{2}  \; \widetilde w _{t, \mathcal P} (z) \; dA(z)  \right)^{\frac{1}{2}} 
&\leq 
\lVert \mu  \rVert_{\infty}  \cdot \lVert\S  \rVert_{L^{2}( \widetilde w _{t, \mathcal P} )} 
\left(  \int_{E} |g|^{2}  \; \widetilde w _{t, \mathcal P} (z) \; dA(z)  \right)^{\frac{1}{2}} \,. 
\end{align}
The second inequality  is applied to the sequence of functions 
 $g=\mu$, $g= \mu \S (\mu)$, $g = \mu \S (\mu \S (\mu))$ and so on.
 Using the triangle inequality in \eqref{PowerSeriesOfFZBarInTermsOfMu} in the 
 $L^{2}( \widetilde w _{t, \mathcal P} )$ norm gives 
\begin{equation}\label{EstimateWholePowerSeriesOfFZBarInTermsOfMu}
\left( \operatorname I_{3} \right)^{\frac{1}{2}} \leq  \Vert  \mu  \Vert _{\infty} 
\Bigl\{  \sum_{n=1}^{\infty}  \left[ \Vert  \mu  \Vert _{\infty}   
\Vert  \S  \Vert _{L^{2}( \widetilde w _{t, \mathcal P} )}  \right]^{n}  \Bigr\}  \left( \operatorname I_{1} \right)^{\frac{1}{2}}.
\end{equation}

The middle term on the right is bounded if we demand 
\begin{equation}\label{SecondEstimateOnKappaInTermsOfP0}
\Vert  \mu  \Vert _{\infty} < \varepsilon _0= \bigl[
{2 \; \Vert  \S  \Vert _{L^{2}( \widetilde w _{t, \mathcal P} )  \to L^{2}( \widetilde w _{t, \mathcal P} )  } } 
\bigr] ^{-1} < 1 \,. 
\end{equation}
This is the $ \varepsilon _0$ required in the statement of Lemma \ref{EstimateForConformalOutside} (and hence $ \varepsilon _0$ is a decreasing function of $t$.)  It follows that 
\begin{equation}\label{EstimateOfI3InTermsOfI1}
\operatorname I_{3} \leq \operatorname I_{1} \,.
\end{equation}

From \eqref{xx}, \eqref{EstimateIntegralOfJacobianUsingBeurlingTransformIdentityAndSplittingInto3Parts}, 
\eqref{EstimateForTermI2InTermsOfI3}, and \eqref{EstimateOfI3InTermsOfI1}, it follows that 
\begin{equation}\label{EstimateIntegralOfJacobianUsingBeurlingTransformIdentityAndSplittingInto3PartsII}
\left(  \sum_{j=1}^{N} \diam(f(P_j))^t \right)^{\frac{2}{t}} \leq C(K) 
\int_{E} J(z,f) \; \widetilde w _{t, \mathcal P} (z) \; dA(z) \leq C' (K,t)\operatorname I_{1} \,.
\end{equation}
It remains to bound $ \operatorname I_1$  by the right hand side of \eqref{MorallyEstimateForDHausdorffContent}. 

But it follows by construction (recall the parenthetical comment right after \eqref{tefinitionBetaJ}) that 
\begin{equation}\label{IdentityForTermI1PartII}
\operatorname I_{1} = \sum_{j=1}^{N}  \ell (P_j)^2 \;  \beta_j= \Vert  \left\{  \ell (P_j)^2  \right\}_{j=1}^{N}   \Vert _{\ell^{\frac{t}{2}} } =
 \left(  \sum_{j=1}^{N} \ell (P_j)^t \right)^{\frac{2}{t}}.
\end{equation}
This completes the proof. 
 
\end{proof}

Recall that it is known how to deal with the quasiconformal map which is `conformal inside'. Namely, recall the following
\begin{theorem}\label{QCConformalInside}
Let $\phi: \C \to \C$ be a principal $K$-quasiconformal mapping which is conformal outside $\D$.
Let $\{ S_j \}_{j=1}^{N}$ be a finite family of pairwise disjoint quasi-disks in $\D$, such that $S_j = f(D_j)$ for a single $K$-quasiconformal map $f$ and for disks (or cubes) $D_j$, and assume that
$\phi$ is conformal in $\Omega = \bigcup_j S_j$. Then for any $t \in (0,2]$ and $t'=\frac{2Kt}{2+(K-1)t}$, we have 
\begin{equation}\label{QCConformalInsideInequality}
\left(\sum_{j=1}^N\diam(\phi(S_j))^{t'}\right)^\frac{1}{t'}\leq C(K)\,
\left(\sum_{j=1}^N\diam(S_j)^t\right)^{\frac{1}{tK}}
\,. 
\end{equation}
\end{theorem}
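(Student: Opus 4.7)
The plan is to combine the conformal rigidity of $\phi|_{S_j}$ (Koebe) with Astala's area distortion via a single Hölder inequality that interpolates between the endpoints $t=2$ (pure area distortion, where $t'=2$) and the dimension bound of \eqref{distortionofdimension}. Raising the target inequality to the $t'$-th power, it suffices to prove
\begin{equation*}
\sum_{j=1}^N \diam(\phi S_j)^{t'} \leq C(K)\,\Bigl(\sum_{j=1}^N \diam(S_j)^{t}\Bigr)^{t'/(tK)}.
\end{equation*}
For each $j$, set $z_j := f(w_j)$, where $w_j$ is the Euclidean center of $D_j$. Quasisymmetry of $f$ places $z_j$ at distance $\approx_K \diam(S_j)$ from $\partial S_j$; since $\phi$ is conformal on $S_j$, Koebe's $1/4$ and distortion theorems give
\begin{equation*}
\diam(\phi S_j)\approx_K |\phi'(z_j)|\,\diam(S_j), \qquad |\phi(S_j)|\approx_K |\phi'(z_j)|^2\,|S_j|.
\end{equation*}

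Next I would apply Hölder with the exponent $p$ chosen so that $pt' = \tfrac{2K}{K-1}$, which is Astala's critical Sobolev exponent. Direct algebra gives $p = \tfrac{2+(K-1)t}{t(K-1)}$, conjugate $q = \tfrac{2+(K-1)t}{2}$, and $1/q = t'/(tK)$, exactly the target exponent on the right-hand side. Factoring
\begin{equation*}
|\phi'(z_j)|^{t'}\diam(S_j)^{t'} = \bigl(|\phi'(z_j)|^{pt'}|S_j|\bigr)^{1/p}\cdot \diam(S_j)^{t'-2/p},
\end{equation*}
noting that $q(t'-2/p)=t$ (straightforward), and applying Hölder with exponents $(p,q)$ yields
\begin{equation*}
\sum_j \diam(\phi S_j)^{t'}\leq C(K)\,\Bigl(\sum_j |\phi'(z_j)|^{2K/(K-1)}|S_j|\Bigr)^{1/p} \Bigl(\sum_j \diam(S_j)^t\Bigr)^{t'/(tK)}.
\end{equation*}
Since $|\phi'|$ is essentially constant on each $S_j$ by Koebe, the first factor is comparable to $\int_{\bigcup_j S_j}|\phi'|^{2K/(K-1)}\,dA$.

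The main obstacle is the endpoint nature of $2K/(K-1)$: Astala's sharp higher integrability $\phi'\in L^p_{\mathrm{loc}}$ only holds for $p<\tfrac{2K}{K-1}$, and in general the endpoint integral $\int|\phi'|^{2K/(K-1)}$ diverges. The way to push past this obstruction is to exploit that $\phi$ is conformal on $\Omega=\bigcup_j S_j$ (so the Beltrami coefficient $\mu=\mu_\phi$ is supported in $\D\setminus\Omega$) and that $\phi$ is principal. Expanding $\phi_{\bar z}=\mu+\mu\S(\mu)+\mu\S(\mu\S(\mu))+\cdots$ and combining Iwaniec's sharp $L^p$-bound for the Beurling transform $\S$ with a careful accounting shows the endpoint sum is bounded by $C(K)$; this is the argument carried out in \cite{astalaclopmateuorobitguriartepreprint}. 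Alternatively, one may work slightly subcritically with $p<\tfrac{2K}{K-1}$, absorb the resulting $\varepsilon$-loss into the constant $C(K)$, and use a dyadic stopping-time decomposition of $\{S_j\}$ by diameter together with area distortion applied on each layer.
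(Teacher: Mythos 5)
The paper does not prove Theorem~\ref{QCConformalInside}; it recalls it from \cite[(2.6)]{astalaclopmateuorobitguriartepreprint} and points to \cite[Lemma 5.2]{astalanesi} for the decisive ingredient. Your proof skeleton --- Koebe distortion to write $\diam(\phi S_j)\approx_K|\phi'(z_j)|\diam(S_j)$, then H\"older with exponent $p$ chosen so that $pt'=\tfrac{2K}{K-1}$, and finally bounding $\sum_j|\phi'(z_j)|^{2K/(K-1)}|S_j|\lesssim_K\int_\Omega|\phi'|^{2K/(K-1)}\,dA$ by disjointness of sub-disks --- is exactly the intended structure, and your exponent arithmetic ($1/q=t'/(tK)$, $q(t'-2/p)=t$) is correct. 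So up to the last step you have reconstructed the argument.

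The gap is in your justification of the endpoint bound $\int_\Omega|\phi'|^{2K/(K-1)}\,dA\le C(K)$. You propose to expand $\phi_{\bar z}=\mu+\mu\S(\mu)+\cdots$ and invoke ``Iwaniec's sharp $L^p$-bound.'' That conjectured sharp norm $\|\S\|_{L^p\to L^p}=p-1$ (for $p\ge2$) is still open, and even granting it the Neumann series gives $\|\mu\|_\infty\|\S\|_{L^p}<1$, hence $p<\tfrac{2K}{K-1}$: strictly subcritical, never the endpoint. The endpoint $p=\tfrac{2K}{K-1}$ is precisely what separates a Hausdorff-measure statement from a Hausdorff-dimension one, as the paper emphasizes. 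The actual source is \cite[Lemma 5.2]{astalanesi}, whose proof (like Astala's area distortion theorem itself, cf.\ \cite[Cor.\ 2.3 and the variational principle, p.\ 48]{astalaareadistortion}) goes through holomorphic motions and the thermodynamic/variational principle, not a Neumann series. Your second ``alternatively'' suggestion of working subcritically and absorbing an $\varepsilon$-loss has the same defect: you would then prove the estimate with exponent $t'-\delta$ for every $\delta>0$, which only recovers the dimension bound \eqref{distortionofdimension}, not \eqref{QCConformalInsideInequality}. So the outer reduction is right, but the step you call ``the main obstacle'' is still the main obstacle, and the mechanism you offer for it does not close.
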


Theorem~\ref{QCConformalInside} can be found in \cite[(2.6)]{astalaclopmateuorobitguriartepreprint} stated for disks $D_j$, but the proof works for $K$-quasi-disks (more precisely, we use it for ``$K$-quasi-squares", i.e. the image under a single $K$-quasiconformal map - where $K$ will be typically close to $1$ - of squares.) It should be emphasized that for a general quasiconformal mapping $\phi$ we have $J(z,\phi)\in L^p_{loc}$ only for $p<\frac{K}{K-1}$. The improved  integrability $p=\frac{K}{K-1}$  under the
extra assumption that $\phi \vert_{ \Omega}$ is conformal was shown in \cite[Lemma 5.2]{astalanesi}. This phenomenon is
crucial for the proof of Theorem~\ref{QCConformalInside}, since we are studying Hausdorff measures rather than
dimension. Note that Theorem~\ref{QCConformalInside} is also implicit in \cite{astalaareadistortion} (see Corollary 2.3
and the variational principle in p.48.)

At this point we prove Astala's conjecture for the case of small dilatation, 
Lemma~\ref{PropositionAbsoluteContinuityOfHausdorffMeasuresUnderPushForwardByQCMapsSmallDilatationV2}.

\begin{proof}[Proof of
Lemma~\ref{PropositionAbsoluteContinuityOfHausdorffMeasuresUnderPushForwardByQCMapsSmallDilatationV2}.] 

We first give the argument that allows us to reduce to the usual normalizations. It is a standard argument, but we give
it for convenience.

Let $\tau$ be a M\"obius transformation fixing $\infty$. The dilatation $K$ of $g$, let us call it $K_g$, is the same as that of $g
\circ \tau$, i.e. $K_{g} = K_{g \circ \tau}$. Also, $\H^t (E) = 0 \Longleftrightarrow \H^t (\tau (E)) = 0$. Consequently,
without loss of generality, we can assume that $E \subset (\frac{1}{32}, \frac{1}{16})^2 \subset \frac{1}{8} \D$.

Let $\mu_g$ be the Beltrami coefficient for $g$. Let $\varphi$ be the (unique) principal homeomorphic solution to the Beltrami equation 
$$\overline{\partial} \varphi = \left( \chi_{\D} \mu_g \right) \partial \varphi \,.$$
Then, by Stoilow's factorization, we have that $g = \psi \circ \varphi$, where $K_g = K_{\psi} = K_{\varphi}$, both $\psi$ and $\varphi$ are $K$-quasiconformal maps, $\varphi$ is principal and $\psi$ is conformal in $\varphi (\D)$. 

Since $\psi$ is conformal in a neighbourhood of $\varphi (E)$, by Koebe's distortion theorem (see e.g. \cite{pommerenke}), $0 < c_{\psi} \leq \inf_{\varphi (E)} |\psi'(z)|  \leq \sup_{\varphi (E)} |\psi'(z)| \leq C_{\psi} < \infty$, and hence $\psi$ is bi-Lipschitz in $\varphi (E)$. Therefore $\H^{t'} (S) = 0 \Longleftrightarrow \H^{t'} (\psi (S)) = 0$ if $S \subset \varphi ( [\frac{1}{32}, \frac{1}{16}]^2 )$.
Consequently, without loss of generality, we can further assume that $g$ is a principal mapping.

Consider $\varepsilon >0$ and use Proposition~\ref{PropositionPackingConstruction}, with $ m=2$, to obtain a 
collection of cubes $\mathcal P = \{P_i\}$ satisfying the conclusions of Proposition~\ref{PropositionPackingConstruction} with
respect to the compact set $E$. Denote $\Omega = \left(  \bigcup_i P_i \right)^{\circ}$.

Following \cite{astalaareadistortion}, decompose $g = \phi \circ f $, where both $\phi$ and $f$ are principal
$K$-quasiconformal mappings, $f$ is conformal outside $\overline{\Omega}$, 
and $\phi$ is conformal in $f(\Omega) \cup (\C \setminus \D)$. 
Recall that Lemma~\ref{EstimateForConformalOutside} only applies to quasiconformal mappings with 
dilatation (by which we mean $\parallel \mu  \parallel_{\infty}$) at most $ \varepsilon _0$.  If we assume that the dilatation of $ g$ is at most $ \varepsilon _0$, 
then the dilatation of $ f$ satisfies the same bound, so that Lemma~\ref{EstimateForConformalOutside} 
applies to it.

Then by quasi-symmetry, Theorem~\ref{QCConformalInside} and 
Lemma~\ref{EstimateForConformalOutside}, 
\begin{align}\label{EstimateHausdorffContentGESmallDilatation}
\H^{t'}_{\infty} (g E) & \leq   \H^{t'}_{\infty} \bigl(g \bigl(\bigcup_i 12 \cdot P_i\bigr)\bigr) \\
&\leq  
\sum_{i} \diam(g(12 \cdot P_i))^{t'}
\\
&\leq  C(K) \sum_{i} \diam(g( P_i))^{t'} 
\nonumber \\
& \leq  C(K) \Bigl( \sum_{i} \diam(f( P_i))^t \Bigr)^{ \frac{t'}{tK} } 
\\
&\leq 
C(K,t) \Bigl( \sum_{i} \ell ( P_i )^t \Bigr)^{ \frac{t'}{tK} } 
\nonumber \\
& \leq  C(K,t)  \Bigl( \H^t_{\infty} (E) + \varepsilon \Bigr)^{ \frac{t'}{tK} } 
 \leq 
 C(K,t) \varepsilon^{ \frac{t'}{tK} } \; .
\end{align}
The parameter $ \varepsilon >0$ was arbitrary, so the proof of
Lemma~\ref{PropositionAbsoluteContinuityOfHausdorffMeasuresUnderPushForwardByQCMapsSmallDilatationV2} is complete.

\end{proof}

\begin{remark}\label{QuantitativeInequalityHausdorffContents}
The proof of Lemma~\ref{PropositionAbsoluteContinuityOfHausdorffMeasuresUnderPushForwardByQCMapsSmallDilatationV2} actually gives the following quantitative estimate for Hausdorff content. Let $0<t<2$ and $t' = \frac{2Kt}{2+(K-1)t}$. Assume $f$ is a principal $K$-quasiconformal mapping with $\frac{K-1}{K+1} \leq \kappa_0(t)$, and let $E \subset (\frac{1}{32}, \frac{1}{16})^2$ be compact. Then
\begin{equation}\label{HausdorffDistortionInequalityForNormalizedKQCMapsSmallDilatation}
\H^{t'}_{\infty} (f E) \leq C(\kappa_0(t))  \Bigl( \H^t_{\infty} (E)  \Bigr)^{ \frac{t'}{tK} } \;.
\end{equation}
We claim that this can be rewritten in the following invariant form. Assume now $f$ is a $K$-quasiconformal mapping with $\frac{K-1}{K+1} \leq \kappa_0(t)$, and let $E$ be a compact set contained in a ball $B$. Let $t$ and $t'$ be as in \eqref{HausdorffDistortionInequalityForNormalizedKQCMapsSmallDilatation}, and let $\diam A$ denote the diameter of the set $A$. Then
\begin{equation}\label{HausdorffDistortionInequalityForGeneralKQCMapsSmallDilatation}
\frac{\H^{t'}_{\infty} (f E) }{ \left[ \diam fB \right]^{t'} } \leq C(\kappa_0(t))  \left\{ \frac{ \H^t_{\infty} (E) }{ \left[ \diam B \right]^{t}  }  \right\}^{ \frac{t'}{tK} } \;.
\end{equation}
Indeed this follows using the method of Corollary 10 in \cite{astalaiwaniecsaksman} (see also \cite{astalaiwaniecmartin}.) Finally, for arbitrary $K>1$, iteration of \eqref{HausdorffDistortionInequalityForGeneralKQCMapsSmallDilatation} (with $\kappa_0(t')$ instead of $\kappa_0(t)$) shows that \eqref{HausdorffDistortionInequalityForGeneralKQCMapsSmallDilatation} holds, with $C(\kappa_0(t))$ replaced by $C(K,t)$.


\end{remark}

\vskip.5in


\vskip.5in

\end{document}